\newcommand{\N}{\mathbb{N}}
\newcommand{\R}{\mathbb{R}}
\newcommand{\Z}{\mathbb{Z}}
\renewcommand{\epsilon}{\varepsilon}
\newtheorem{thm}{Theorem}
\newtheorem{prop}[thm]{Proposition}
\newtheorem{corollaire}[thm]{Corollary}
\newtheorem{lemma}[thm]{Lemma}
\theoremstyle{definition}
\newtheorem{defn}[thm]{Definition}
\newtheorem{rem}[thm]{Remark}
\newtheorem{expl}[thm]{Example}
\newtheorem*{notation}{Notation}
\title{Upper bounds for Steklov eigenvalues of subgraphs of polynomial growth Cayley graphs}
\author{Léonard Tschanz}
\date{}
\begin{document}
\maketitle

\begin{abstract}
We study the Steklov problem on a subgraph with boundary $(\Omega,B)$ of a polynomial growth Cayley graph $\Gamma$. We prove that for each $k \in \N$, the $k^{\mbox{th}}$ eigenvalue tends to $0$ proportionally to $1/|B|^{\frac{1}{d-1}}$, where $d$ represents the growth rate of $\Gamma$. The method consists in associating a manifold $M$ to $\Gamma$ 
and a bounded domain $N \subset M$ to a subgraph $(\Omega, B)$ of $\Gamma$. We find upper bounds for the Steklov spectrum of $N$ and transfer these bounds to $(\Omega, B)$ by discretizing $N$ and using comparison Theorems. 
\end{abstract}

\section{Introduction}

Given a smooth compact orientable Riemannian manifold $M$ of dimension $d \geq 2$ with a smooth boundary $\partial M$, the Steklov problem on $M$ is to find all $\sigma \in \R$ such that there exists a non trivial function $u$ satisfying 
\begin{align*}
\left\{
\begin{array}{lcc}
\Delta u = 0 & \mbox{on} & M \\
\frac{\partial u}{\partial \nu} = \sigma u & \mbox{on} & \partial M,
\end{array}
\right.
\end{align*}

where $\Delta$ is the Laplace-Beltrami operator acting on functions on $M$, and $\frac{\partial}{\partial \nu}$ is the outward normal derivative along $\partial M$. It is well known that the Steklov spectrum is discrete and forms a sequence such as
\begin{align*}
0 = \sigma_0 < \sigma_1 \leq \sigma_2 \leq \ldots \nearrow \infty.
\end{align*}

Among the interesting questions in the study of the eigenvalues of the Steklov problem lies the one consisting on wondering which domain of the Euclidean space maximizes the eigenvalues, and more generally, to get upper bounds, using certain assumptions such as a predefined volume. In \cite{Weinstock54}, Weinstock proves that for simply connected planar domains with an analytic boundary and assigned perimeter, the disk maximizes the first Steklov eigenvalue. In \cite{Brock01}, Brock proves that for smooth bounded domains in $\R^d$ with prescribed volume, $\sigma_1$ is maximized by the ball. Among other thing, in \cite{CEG1} upper bounds for all the eigenvalues of domains of the Euclidean space were given.

\medskip

As it is already done for the Laplace operator, one can define a discrete Steklov problem, which is a problem defined on graphs with boundary and similar to the Steklov problem defined above. This problem has recently been investigated by various authors, such as \cite{HHW, HH, Per1, Per2, CGR}. This paper will focus on the discrete Steklov problem ; let us begin by describing it briefly.


\begin{defn}
A graph with boundary is a couple $(\Gamma, B)$, where $\Gamma =(V, E)$ is a simple (that is without loop or multiple edge) connected undirected graph, and $B \subset V$ is a non empty subset of vertices called the boundary. 
The subset of vertices $B^c$ is called the interior of $\Gamma$.
\end{defn}

For $i, j \in V$, we write $i \sim j$ when $i$ is adjacent to $j$, meaning that $\{i, j\} \in E$.
For $\Omega \subset V$, we denote $|\Omega|$ the cardinal of $\Omega$, which is the number of vertices contained by $\Omega$.
In this paper, all graphs with boundary are finite. The space of real functions $u$ defined on the vertices $V$ is denoted by $\R^V$, it is the Euclidean space of dimension $|V|$. If a real function $u$ is defined only on the boundary $B$, we will say that $u \in \R^B$, and it corresponds to the Euclidean space of dimension $|B|$. For $u, v \in \R^V$, the scalar product $\langle u, v \rangle$ is the usual scalar product in $\R^{|V|}$.  We now introduce the Laplacian operator $\Delta :\R^V  \longrightarrow  \R^V $, defined by
\begin{align*}
\Delta u :  V & \longrightarrow \R \\
            i & \longmapsto \Delta u(i) = \sum_{j \sim i}(u(i)-u(j)),
\end{align*}
as well as the normal derivative $\frac{\partial}{\partial \nu} :\R^V  \longrightarrow \R^B $, defined by
\begin{align*}
\frac{\partial u}{\partial \nu} : B & \longrightarrow  \R \\ 
                                i & \longmapsto \frac{\partial u}{\partial \nu}(i) = \sum_{j \sim i} (u(i)-u(j)).
\end{align*}

\begin{defn}
The Steklov problem on a graph with boundary $(\Gamma, B)$ consists in finding all $\sigma \in \R$ such that there exists a non trivial function $u \in \R^V$ satisfying 
\begin{align*}
\left\{
\begin{array}{lcc}
\Delta u(i) = 0 & \mbox{if} & i \in B^c \\
\frac{\partial u}{\partial \nu}(i) = \sigma u(i) & \mbox{if} & i  \in B.
\end{array}
\right.
\end{align*}
Such a $\sigma$ is called a Steklov eigenvalue of $(\Gamma, B)$.
\end{defn}
As explained in \cite{Per1}, the Steklov spectrum forms a sequence as follows
\begin{align*}
0 = \sigma_0 < \sigma_1 \leq \sigma_2 \leq \ldots \leq \sigma_{|B|-1}.
\end{align*}
Recent interesting outcomes related to this problem include the following :

In \cite{HHW}, the authors find a Cheeger type inequality for the first non trivial eigenvalue. In \cite{Per1}, the author finds a lower bound for the first non trivial Steklov eigenvalue. Lower bounds for higher Steklov eigenvalues are given in \cite{HM}.

In \cite{CGR},the authors described a process, called a discretization, permitting to associate a graph with boundary to a Riemannian manifold and showed some spectral bonds between a manifold and its discretization. 
\medskip

\begin{defn}
Given a graph $\Gamma = (V,E)$ and a finite connected subset $\Omega \subset V$, one can define a graph with boundary $(\bar{\Omega}, E', B)$ included in $\Gamma$, by saying
\begin{itemize}
\item $ B = \{j  \in V \backslash \Omega : \exists \; i \in \Omega  \; \mbox{such that} \; \{i,j\} \in E\}$ ;
\item $\bar{\Omega} = \Omega \cup B$ ;
\item $E' = \{ \{i,j\} \in E : i \in \Omega  , j \in \bar{\Omega} \}$.
\end{itemize}
Such a graph is called a subgraph of $\Gamma$ and is denoted $(\Omega, B)$, with $\Omega$ the interior and $B$ the boundary.
\end{defn}

Following the work of Brock cited above, Han and Hua found in \cite{HH} a similar result for subgraphs of the integer lattices :
\begin{thm}[Han, Hua, 2019] \label{corollaire : sigma_1 HH}
Let $\Z^d$ be the integer lattice of dimension $d$. Let $(\Omega, B)$ be a subgraph of $\Z^d$. Then we have
\begin{align*}
\sum_{l=1}^{d} \frac{1}{\sigma_l(\Omega, B)} \geq \bar{C} \cdot |\Omega|^{\frac{1}{d}} - \frac{C'}{|\Omega|},
\end{align*}
where $\bar{C}= (64 d^3 \omega_d^{\frac{1}{d}})^{-1}, C'=\frac{1}{32d}$ and $\omega_d$ is the volume of the unit ball in $\R^d$. 
\end{thm}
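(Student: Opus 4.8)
The plan is to follow Brock's Euclidean argument: use centred coordinate functions as test functions and reduce the statement to an isoperimetric-type inequality for the boundary set. Write $D(u)=\sum_{\{i,j\}\in E'}(u(i)-u(j))^2$ for the Dirichlet energy of the subgraph $(\bar\Omega,E',B)$ and $Q(u)=\sum_{i\in B}u(i)^2$, so that $\sigma_l(\Omega,B)=\min_{\dim F=l+1}\max_{0\neq u\in F}D(u)/Q(u)$. Place the origin of $\Z^d$ at the centroid of $B$ and let $F_0=\operatorname{span}(x^{(1)},\dots,x^{(d)})$, where $x^{(l)}(i)=i_l$ is the $l$-th coordinate function restricted to $\bar\Omega$; by the choice of origin $\sum_{i\in B}x^{(l)}(i)=0$, so $F_0$ is orthogonal on $B$ to the constant functions, $\dim F_0=d$, and $Q|_{F_0}$ is positive definite (a nonzero affine function cannot vanish on all of $B$, since $B$ must contain, on every axis line through $\Omega$, the two exterior neighbours of $\Omega$). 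A standard min-max argument then gives $\sigma_l(\Omega,B)\le\mu_l$ for $l=1,\dots,d$, where $\mu_1\le\dots\le\mu_d$ are the eigenvalues of the pencil $(D|_{F_0},Q|_{F_0})$, whence
\begin{align*}
\sum_{l=1}^{d}\frac{1}{\sigma_l(\Omega,B)}\ \ge\ \sum_{l=1}^{d}\frac{1}{\mu_l}\ =\ \operatorname{tr}\!\big((D|_{F_0})^{-1}Q|_{F_0}\big).
\end{align*}

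Next I would compute the two forms in the basis $(x^{(1)},\dots,x^{(d)})$. Since every edge of $\Z^d$ is axis-parallel, $x^{(l)}(i)-x^{(l)}(j)$ equals $\pm1$ when $\{i,j\}$ points in direction $e_l$ and $0$ otherwise; hence $D|_{F_0}$ is \emph{diagonal}, with $(l,l)$ entry $m_l:=\#\{\text{edges of }E'\text{ in direction }e_l\}$, while $(Q|_{F_0})_{ll}=\sum_{i\in B}(i_l)^2$. Every edge of $E'$ meets $\Omega$ and each vertex lies on exactly two edges in a fixed direction, so $1\le m_l\le 2|\Omega|$; in particular $D|_{F_0}$ is invertible, and
\begin{align*}
\sum_{l=1}^{d}\frac{1}{\sigma_l(\Omega,B)}\ \ge\ \sum_{l=1}^{d}\frac{\sum_{i\in B}(i_l)^2}{m_l}\ \ge\ \frac{1}{2|\Omega|}\sum_{i\in B}|i|^2 .
\end{align*}
This reduces the theorem to a purely combinatorial lower bound on the second moment $\sum_{i\in B}|i|^2$ of the boundary about its centroid.

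The remaining task is the discrete weighted isoperimetric inequality $\sum_{i\in B}|i|^2\ge c_d\,|\Omega|^{\frac{d+1}{d}}-(\text{lower order})$ with $c_d$ of order $\omega_d^{-1/d}$; dividing by $2|\Omega|$ then yields a bound of the stated shape, the regime of small $|\Omega|$ (where the right-hand side is negative) being trivial. The tool I would use is the family of \emph{capping vertices}: for each direction $l$ and each line $\ell$ parallel to $e_l$ meeting $\Omega$, the two lattice points of $\ell$ lying just past the extremes of $\ell\cap\Omega$ belong to $B$, and these $2|\pi_l(\Omega)|$ points (with $\pi_l$ the projection forgetting $i_l$) are pairwise distinct. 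Estimating $\sum_{i\in B}(i_l)^2$ below by the contribution of the capping vertices, separating the part measuring fibre \emph{lengths} from the part measuring the \emph{spread} of the fibre endpoints, and summing over $l$ with the Loomis–Whitney inequality $|\Omega|^{d-1}\le\prod_l|\pi_l(\Omega)|$, should give the claim with explicit constants. A cleaner alternative — the one that transparently produces the factor $\omega_d^{-1/d}$ — is to fatten $\Omega$ to the (connected) Euclidean domain $\widehat\Omega=\bigcup_{i\in\Omega}(i+[-\tfrac12,\tfrac12]^d)$, observe that $|\widehat\Omega|=|\Omega|$ and that $\int_{\partial\widehat\Omega}|x|^2\,dA\le C_d\sum_{i\in B}|i|^2+C_d|\Omega|$ (because $\partial\widehat\Omega$ is a union of unit facets, each attached to a vertex of $B$), and then invoke Brock's sharp continuous inequality $\int_{\partial D}|x-c_D|^2\,dA\ge d\,\omega_d^{-1/d}|D|^{\frac{d+1}{d}}$ about the centroid $c_D$ of $\partial D$ (using that replacing $c_D$ by our origin only increases the integral).

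The main obstacle is exactly this last inequality. The reduction to it is routine linear algebra, but the inequality must hold uniformly over \emph{all} subgraphs, and the naive one-direction projection estimate is far from tight for ``spread-out'' regions — long tubes, staircases, combs — for which $B$ is distributed over a large diameter and $\sum_{i\in B}|i|^2$ is in fact much larger than $|\Omega|^{(d+1)/d}$. One must therefore combine the information coming from all $d$ coordinate directions so that whichever geometric feature forces $B$ to be large is captured; passing through $\widehat\Omega$ outsources precisely this bookkeeping to Brock's theorem, at the cost only of the multiplicative constants and a lower-order additive error, which is absorbed into the $C'/|\Omega|$ term. Tracking constants along the way should reproduce a bound of the claimed form $\bar C\asymp(d^{3}\omega_d^{1/d})^{-1}$, $C'\asymp d^{-1}$.
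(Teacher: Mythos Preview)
The paper does not prove this theorem. It is stated as a cited result of Han and Hua \cite{HH} and serves only as motivation for the main Theorem~\ref{thm : principal}; no proof or sketch is given in the text. Consequently there is nothing in the paper to compare your proposal against.

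For what it is worth, your outline is the right kind of argument and is essentially the one Han and Hua use: test with centred coordinate functions (\`a la Brock), exploit that the Dirichlet form is diagonal in these coordinates because edges of $\Z^d$ are axis-parallel, and reduce to a weighted isoperimetric lower bound on $\sum_{i\in B}|i|^2$. The step you flag as the obstacle --- the second-moment inequality for $B$ --- is exactly where the work lies in \cite{HH}, and your ``fatten $\Omega$ to a Euclidean domain and invoke Brock'' route is precisely the device they employ. Two small cautions if you pursue it: (i) the centroid of $B$ need not be a lattice point, so the shift is by a real vector and the coordinate functions are affine rather than lattice-integer valued --- this is harmless but should be said; (ii) in the comparison $\int_{\partial\widehat\Omega}|x|^2\,dA\le C_d\sum_{i\in B}|i|^2+C_d|\Omega|$ one must be a bit careful: a single boundary facet of $\widehat\Omega$ may correspond to a vertex of $B$ that lies far from that facet (several interior vertices can share the same exterior neighbour), so the bookkeeping attaching facets to boundary vertices needs a line of justification before the constants can be tracked.
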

This Theorem gives us control over the $d$ first Steklov eigenvalues of a subgraph $(\Omega, B)$ of $\Z^d$ and leads to an interesting consequence : for any sequence of subgraphs $(\Omega_l, B_l)_{l=1}^\infty$ of $\Gamma$ such that $|\Omega_l| \underset{l\to \infty}{\longrightarrow} \infty$, we have $\sigma_1( \Omega_l, B_l) \underset{l\to \infty}{\longrightarrow}  0$.
\medskip

However, unlike Brock, Han and Hua do not get an equality case.
\medskip

The result of Han and Hua has then been partially extended by Perrin, who found an isoperimetric upper bound for the first eigenvalue of subgraphs of any polynomial growth Cayley graph, see \cite{Per2}.
We will recall in section \ref{section : 2} what a polynomial growth Cayley graph is and the other notions of geometric group theory which are necessary for the understanding of the paper.

\begin{thm}[Perrin, 2020] \label{corollaire : sigma_1 HP}
Let $\Gamma = (V,E)$ be a Cayley graph with polynomial growth of order $d \geq 2$. There exists $\tilde{C}(\Gamma) >0$ such that for any finite subgraph $(\Omega, B)$ of $\Gamma$, we have
\begin{align*}
\sigma_1(\Omega, B) \leq \tilde{C}(\Gamma) \cdot \frac{1}{|B|^{\frac{1}{d-1}}}.
\end{align*}
\end{thm}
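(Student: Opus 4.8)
The plan is to prove the bound $\sigma_1(\Omega,B) \le \tilde C(\Gamma)\cdot|B|^{-1/(d-1)}$ by the standard variational strategy: exhibit a test function (or a family of test functions) supported near the boundary, whose Rayleigh quotient is small, and then invoke the min-max characterization of $\sigma_1$. Recall that
\begin{align*}
\sigma_1(\Omega,B) = \inf\left\{\frac{\sum_{\{i,j\}\in E'}(u(i)-u(j))^2}{\sum_{i\in B}u(i)^2} : u\in\R^{\bar\Omega},\ \sum_{i\in B}u(i)=0,\ u\not\equiv 0\right\},
\end{align*}
where we extend test functions harmonically into $\Omega$ if we wish, though it suffices to use any extension since the harmonic one minimizes the Dirichlet energy for fixed boundary data. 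So the first step is to record this variational principle and note that it is enough to build \emph{two} functions supported on disjoint "collars" of the boundary $B$, each with Dirichlet energy at most $\varepsilon$ times its boundary $L^2$-norm; taking an appropriate linear combination that is orthogonal to constants on $B$ then yields a test function with Rayleigh quotient $\le\varepsilon$.

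The geometric input, which is where the polynomial growth hypothesis enters, is a partition of the boundary $B$ into two roughly equal pieces $B_1, B_2$ together with a function $\rho$ (a distance-like function in $\Gamma$, e.g. the word metric from a suitable base vertex or a suitably chosen hyperplane/sublevel set) such that the boundary decomposes along level sets of $\rho$. Concretely, I would look for an annular region $A\subset\bar\Omega$ separating a subset $B_1$ of the boundary of size $\ge |B|/2$ (say) from the rest, such that $A$ can be "foliated" into at least $\sim |B_1|^{1/(d-1)}$ disjoint level sets each of which has the same order of cardinality. On each such level set one puts the value of a piecewise-linear profile in $\rho$ that ramps from $0$ to $1$; the Dirichlet energy is then $\sum(\text{increments})^2\cdot(\text{size of level set})$, and by Cauchy–Schwarz with $L$ level sets this is $\le \frac{1}{L}\cdot(\max\text{level set size})$, while the boundary $L^2$-norm is $\gtrsim |B_1|$. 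Polynomial growth of order $d$ forces balls of radius $r$ to have cardinality $\asymp r^d$; an isoperimetric-type consequence (the "$(d-1)$-dimensional" behaviour of separating sets, i.e. the fact that a finite set $\Omega$ with boundary $B$ satisfies $|\Omega|\lesssim |B|^{d/(d-1)}$, equivalently $|B|\gtrsim|\Omega|^{(d-1)/d}$) is what produces the exponent $\frac{1}{d-1}$: one gets $L\asymp |B|^{1/(d-1)}$ usable level sets before the annulus exhausts $\Omega$, and the level sets have size $\lesssim |B|^{(d-2)/(d-1)}$ on average, which balances to give energy $\lesssim |B|^{(d-2)/(d-1)}/|B|^{1/(d-1)} = |B|^{(d-3)/(d-1)}$... one then checks the ratio against $|B_1|\asymp|B|$ to read off $\sigma_1\lesssim |B|^{-1/(d-1)}$.

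I would carry out the steps in this order: (1) state the Rayleigh-quotient min-max and the reduction to two disjoint collars orthogonal to constants; (2) recall from geometric group theory that polynomial growth of degree $d$ gives $|B(r)|\asymp r^d$ and derive the separation/isoperimetric estimate $|B|\gtrsim|\Omega|^{(d-1)/d}$, or directly the "many parallel separating sets" statement; (3) construct the distance-like function $\rho$ and the two nested families of level sets, splitting $B$ into $B_1,B_2$ of comparable size, ensuring disjoint supports; (4) define the piecewise-linear test functions, compute their Dirichlet energies via Cauchy–Schwarz over the level sets, bound the boundary norms from below, and assemble the final estimate, tracking the dependence of the constant on $\Gamma$ (generating set, growth constants) to define $\tilde C(\Gamma)$.

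The main obstacle I expect is step (3): producing, inside an \emph{arbitrary} finite subgraph $(\Omega,B)$, a controlled family of $\asymp|B|^{1/(d-1)}$ disjoint separating level sets of comparable size that together carry a definite fraction of $B$ on one side. Unlike in $\Z^d$, where one can literally use coordinate hyperplanes as in Han–Hua, in a general polynomial growth Cayley graph one must use the word metric (or a Følner-type exhaustion) and control how the metric spheres intersect $\Omega$; the balancing argument that makes the exponent come out exactly $\frac{1}{d-1}$ — rather than something weaker — requires a careful use of the growth estimate to guarantee that the foliation does not degenerate (e.g. that not too much of $B$ concentrates on a single level). Getting the constant to depend only on $\Gamma$ and not on $\Omega$ is the crux.
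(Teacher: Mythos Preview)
This theorem is quoted in the paper as a prior result of Perrin \cite{Per2}; the paper does not give its own direct proof of it. What the paper does do is recover it as the special case $k=1$ of the main Theorem~\ref{thm : principal}, and the route taken there is entirely different from yours: instead of building test functions on the graph, the paper constructs a Riemannian manifold $M$ modeled on $\Gamma$, associates to $(\Omega,B)$ a bounded domain $(N,\Sigma)\subset M$, applies the Colbois--El~Soufi--Girouard bound $\sigma_k(N)\le C\,|N|^{(d-2)/d}|\Sigma|^{-1}k^{2/d}$, and then transfers the estimate back to $(\Omega,B)$ via the discretization/rough-isometry machinery of \cite{CGR}. Your direct variational approach is much closer in spirit to what Perrin himself does and avoids the manifold detour entirely; the trade-off is that the paper's method gives all $\sigma_k$ at once, whereas the level-set construction you sketch is tailored to $\sigma_1$.

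Your plan is broadly sound, but the arithmetic in your energy estimate is inconsistent. With $L\asymp|B|^{1/(d-1)}$ level sets, the relevant separating sets have size $\asymp|B|$ (not $|B|^{(d-2)/(d-1)}$): a metric sphere of radius $r$ in a group of growth $d$ has $\asymp r^{d-1}$ vertices, and the natural scale here is $r\asymp|B|^{1/(d-1)}$, so $r^{d-1}\asymp|B|$. The energy is then $\asymp (\text{level-set size})/L \asymp |B|/|B|^{1/(d-1)} = |B|^{(d-2)/(d-1)}$, and dividing by the boundary norm $\asymp|B|$ gives $|B|^{-1/(d-1)}$ as claimed. Your intermediate line ``energy $\lesssim |B|^{(d-3)/(d-1)}$'' does not match the conclusion you draw from it.

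The difficulty you flag in step~(3) is genuine and is exactly the heart of the matter: in a general Cayley graph one cannot slice by coordinate hyperplanes, and one must instead use word-metric spheres (or a F{\o}lner exhaustion) and control how they meet an \emph{arbitrary} $\Omega$. The pigeonhole that guarantees $\asymp|B|^{1/(d-1)}$ good consecutive radii---each with a separating set of size $O(|B|)$ inside $\bar\Omega$---is where the polynomial growth and the isoperimetric inequality of Proposition~\ref{prop : isoperimetrie} are used, and making this uniform in $\Omega$ is precisely what fixes $\tilde C(\Gamma)$.
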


This Theorem gives us control over the first Steklov eigenvalue of a subgraph of any polynomial growth Cayley graph and leads to the same consequence as the previous Theorem.
\medskip

It is therefore natural to wonder whether it is possible to extend our control to all of the eigenvalues of a subgraph of any polynomial growth Cayley graph.
\medskip

The main result of this paper is the following :
\begin{thm} \label{thm : principal}
Let $\Gamma = Cay(G,S)$ be a polynomial growth Cayley graph of order $d \geq 2$. Let $(\Omega, B)$ be a subgraph of  $\Gamma$. Then there exists a constant $C(\Gamma) >0$ such that for all $k < |B|$,
\begin{align*}
\sigma_k(\Omega, B) \leq C(\Gamma) \cdot \frac{1}{|B|^{\frac{1}{d-1}}} \cdot k^{\frac{d+2}{d}}.
\end{align*}
\end{thm}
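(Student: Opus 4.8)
The strategy, as outlined in the abstract, is to pass to the continuous side, prove the corresponding eigenvalue bound for a Euclidean-type domain, and then transfer it back by discretization. So the first step is to attach to the polynomial growth Cayley graph $\Gamma = Cay(G,S)$ a Riemannian manifold $M$: by Gromov's theorem $G$ is virtually nilpotent, and a standard construction (nilmanifold / polynomial growth model, as in the work of Pansu and as used by Perrin) produces a complete manifold $M$ of dimension $d$ quasi-isometric to $\Gamma$, together with a uniform ``thickening'' procedure sending a finite subgraph $(\Omega,B)$ to a bounded domain $N\subset M$ with $\mathrm{Vol}(N)\asymp|\Omega|$ and $\mathrm{Vol}(\partial N)\asymp|B|$. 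I would set up all of section~\ref{section : 2} around making this correspondence precise and recording that $N$, $\partial N$ are uniformly regular (bounded geometry, uniform collar), so that the discretization results of \cite{CGR} apply with constants depending only on $\Gamma$.

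The second step is the continuous eigenvalue bound: I want, for every $k$, an estimate of the form $\sigma_k(N) \le C(\Gamma)\,|\partial N|^{-1/(d-1)}\,k^{(d+2)/d}$. Here I would follow the test-function method of \cite{CEG1} (Colbois–El Soufi–Girouard): cover $\partial N$ by a suitable number of disjoint ``capacitors'' (pairs of nested sets with controlled relative capacity) adapted to the geometry of $M$, roughly $k$ of them, each carrying boundary measure $\asymp |\partial N|/k$; building $\asymp k$ disjointly supported test functions and using the variational (min-max) characterization of $\sigma_k$ gives a bound governed by the capacity-to-boundary-measure ratio of a single piece. Because $M$ has polynomial volume growth of order $d$, a piece carrying boundary measure $m$ sits in a region of scale $\asymp m^{1/(d-1)}$, and the capacitor estimate yields the ratio $\asymp m^{-1/(d-1)}$ up to the combinatorial factor $k^{(d+2)/d}$ coming from how efficiently one can pack $k$ pieces — this is exactly the exponent appearing in \cite{CEG1} for Euclidean domains, and the polynomial-growth hypothesis is what makes the same volume-counting work here.

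The third step is to transfer the bound from $N$ back to $(\Omega,B)$. Using the comparison theorems of \cite{CGR} between the Steklov spectrum of a manifold with boundary and that of its discretization, together with the fact that $(\Omega,B)$ is (up to uniformly bounded distortion) a discretization of $N$, one gets $\sigma_k(\Omega,B)\le A(\Gamma)\,\sigma_k(N) + (\text{lower order})$, and combining with step two and $|\partial N|\asymp|B|$ produces the claimed inequality with a constant depending only on $\Gamma$. One should double-check the range of $k$: the bound is only asserted for $k\le|B|$, which is consistent since $\sigma_k(\Omega,B)$ is only defined for $k\le|B|-1$, and the discretization comparison needs $k$ not too large relative to $|B|$, which is automatic here.

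The main obstacle, I expect, is step two carried out uniformly over all Cayley graphs of polynomial growth of a given order, rather than just for $\Z^d$ or $\R^d$: one needs the capacitor/volume-packing estimates on the model manifold $M$ with constants that depend only on $d$ (or on $\Gamma$) and not on the scale, which requires some care because $M$ is only asymptotically a nilpotent Lie group with a Carnot–Carathéodory-type metric, so small-scale and large-scale geometry differ. The technical heart is therefore a robust version of the Colbois–El Soufi–Girouard construction valid in bounded geometry with controlled volume growth; once that is in place, steps one and three are essentially bookkeeping built on \cite{CGR} and on the geometric group theory recalled in section~\ref{section : 2}.
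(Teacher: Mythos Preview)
Your three-step outline---build a model manifold, apply \cite{CEG1}, transfer back via \cite{CGR}---is exactly the paper's strategy, but your execution of the first step is quite different and your bookkeeping in the second and third is off.

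For the manifold: you propose invoking Gromov's theorem and passing to a nilpotent Lie group, then worrying (rightly) about reconciling small-scale Riemannian geometry with large-scale Carnot--Carath\'eodory behaviour and about how to thicken a subgraph into a domain with uniformly bounded geometry. The paper avoids all of this. It builds $M$ purely combinatorially: one fixed compact ``fundamental piece'' $P$ (a $d$-sphere with $|S|$ cylindrical boundary components) is attached to every vertex of $\Gamma$, and pieces are glued along edges. Since $M$ is locally isometric to $P$ everywhere, bounded geometry, the packing property, and the injectivity-radius and curvature hypotheses needed for \cite{CGR} are automatic; polynomial volume growth of order $d$ then follows directly from that of $\Gamma$ (Proposition~\ref{prop : croissance poly}). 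The domain $N$ attached to $(\Omega,B)$ is simply the union of the pieces over $\bar\Omega$, with a small ball excised from each boundary piece to force a component of $\Sigma$ there; its $\epsilon$-discretization is roughly isometric to $(\Omega,B)$ with constants depending only on $P$ and $\epsilon$. No structure theory of polynomial-growth groups beyond the isoperimetric inequality of Proposition~\ref{prop : isoperimetrie} is used.

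For the exponents: the bound from \cite{CEG1} (Theorem~\ref{thm : CEG}) gives only $\sigma_k(N)\le C\,|N|_g^{(d-2)/d}\,|\Sigma|^{-1}\,k^{2/d}$, not $k^{(d+2)/d}$. The missing factor of $k$ comes from the discretization comparison of \cite{CGR}, which in this direction reads $\sigma_k(\tilde V,V_\Sigma)\le C_{18}^{-1}\,k\,\sigma_k(N)$ (inequality~(\ref{ineg : 3})), not the loss-free multiplicative or additive comparison you wrote. One then uses the graph isoperimetric inequality (Proposition~\ref{prop : isoperimetrie}) to convert $|N|_g^{(d-2)/d}|\Sigma|^{-1}$ into $|\Sigma|^{-1/(d-1)}$, the rough-isometry comparison of \cite{CGR} to pass from $(\tilde V,V_\Sigma)$ to $(\Omega,B)$, and $|\Sigma|\ge |B|\cdot|\mathbb S^{d-1}|$ to finish.
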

One can observe that the bound depends on the cardinal of the boundary in the same way as Perrin's one in Theorem \ref{corollaire : sigma_1 HP}.

This leads to a consequence that extends the one of Theorems \ref{corollaire : sigma_1 HH} and \ref{corollaire : sigma_1 HP} :

\begin{corollaire} \label{corollaire : sigma_k}
Let $\Gamma$ be a polynomial growth Cayley graph of order $d \geq 2$ and  $(\Omega_l, B_l)_{l=1}^\infty$ be a sequence of subgraphs of $\Gamma$ such that $|\Omega_l| \underset{l \to \infty}{\longrightarrow} \infty$. Fix $k \in \N$. Then we have
\begin{align*}
\sigma_k(\Omega_l, B_l) \underset{l \to \infty}{\longrightarrow} 0.
\end{align*}
\end{corollaire}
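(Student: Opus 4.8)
The plan is to deduce Corollary~\ref{corollaire : sigma_k} directly from Theorem~\ref{thm : principal}, using only the elementary fact that for a subgraph $(\Omega_l, B_l)$ of $\Gamma$ the size of the boundary $B_l$ must grow with the size of $\Omega_l$. The only nontrivial ingredient beyond Theorem~\ref{thm : principal} is therefore the following isoperimetric-type observation: if $|\Omega_l| \to \infty$ then $|B_l| \to \infty$ as well. I would prove this by a volume-counting argument in the Cayley graph $\Gamma$, which has bounded vertex degree, say $D = |S|$ (assuming $S$ symmetric and finite).

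\emph{Step 1: Reduce to showing $|B_l| \to \infty$.} Fix $k \in \N$. By Theorem~\ref{thm : principal}, as soon as $k \leq |B_l|$ we have
\begin{align*}
\sigma_k(\Omega_l, B_l) \leq C(\Gamma) \cdot \frac{1}{|B_l|^{\frac{1}{d-1}}} \cdot k^{\frac{d+2}{d}}.
\end{align*}
Here $C(\Gamma)$ and $d$ are fixed, and $k$ is fixed, so the right-hand side tends to $0$ provided $|B_l| \to \infty$. Hence it suffices to establish that $|B_l| \underset{l\to\infty}{\longrightarrow} \infty$; once $|B_l| \geq k$ for $l$ large, the bound applies and gives the conclusion.

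\emph{Step 2: Show $|B_l| \to \infty$ via a ball-covering argument.} Since $\Gamma$ has polynomial growth of order $d$, there is a constant $c(\Gamma)$ with $|B_\Gamma(x, r)| \leq c(\Gamma) r^d$ for every vertex $x$ and radius $r$, where $B_\Gamma(x,r)$ is the ball of radius $r$ in the word metric. Fix a vertex $x_0 \in \Omega_l$. I claim that $\Omega_l \subset \bigcup_{b \in B_l} B_\Gamma(b, \mathrm{diam}(\Omega_l))$ is too crude; instead argue as follows. For any $r$, if $\Omega_l$ contained a vertex $y$ at distance $> r$ from every vertex of $B_l$, then the ball $B_\Gamma(y, r)$ would lie entirely in $\Omega_l$ (a shortest path from $y$ leaving $\Omega_l$ would have to cross $B_l$), so $|\Omega_l| \geq |B_\Gamma(y,r)|$. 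Conversely, suppose $|B_l| \leq N$. Then every vertex of $\Omega_l$ lies within distance $r$ of $B_l$ for some $r$ to be chosen; more precisely, since $\Omega_l$ is connected and adjacent to $B_l$, every vertex of $\Omega_l$ is within distance $\mathrm{diam}(\Omega_l \cup B_l)$ of $B_l$, which is unbounded, so this needs care. The clean statement: if $|B_l| \leq N$, then the vertices within distance $r$ of $B_l$ number at most $N \cdot c(\Gamma)(r+1)^d$, and $\Omega_l$ cannot contain any vertex farther than $r$ from $B_l$ unless $|\Omega_l| \geq c'(\Gamma) r^{?}$ — but lower bounds on ball growth are not available in general.

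The main obstacle is precisely that polynomial growth of order $d$ only gives an \emph{upper} bound on ball sizes in general, not a matching lower bound, so a vertex deep inside $\Omega_l$ need not force $\Omega_l$ to be large. To circumvent this I would instead use the correct elementary bound: every vertex of $\Omega_l$ is adjacent to some vertex of $\bar\Omega_l = \Omega_l \cup B_l$, and iterating, $\Omega_l \subseteq \bigcup_{b\in B_l}\bigcup_{j=0}^{|\Omega_l|} B_\Gamma(b,j)$ is vacuous; the genuinely correct approach is a boundary-to-volume inequality. Since $\Gamma$ is a polynomial growth Cayley graph, it is well known (e.g. by the isoperimetric inequalities for such groups, or directly: a finite set $\Omega$ with small vertex boundary $|\partial\Omega| = |B|$ satisfies $|\Omega| \leq c(\Gamma) |B|^{\frac{d}{d-1}}$, the discrete Loomis--Whitney / Coulhon--Saloff-Coste isoperimetric inequality). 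Granting this, $|\Omega_l| \leq c(\Gamma)|B_l|^{\frac{d}{d-1}}$, hence $|B_l| \geq (|\Omega_l|/c(\Gamma))^{\frac{d-1}{d}} \to \infty$. Combined with Step 1, this proves the corollary. I expect the write-up to simply cite Theorem~\ref{corollaire : sigma_1 HP} or Perrin's work for the isoperimetric inequality $|\Omega| \lesssim |B|^{d/(d-1)}$ (which is implicit there), or alternatively to observe that $\sigma_1(\Omega_l,B_l)\to 0$ already follows from Theorem~\ref{corollaire : sigma_1 HP} and that for $k\geq 2$ the case $|B_l|<k$ occurs only finitely often by the same isoperimetric bound, so that Theorem~\ref{thm : principal} eventually applies and finishes the argument.
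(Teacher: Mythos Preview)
Your proposal is correct and follows exactly the paper's approach: apply Theorem~\ref{thm : principal} and use the discrete isoperimetric inequality to get $|B_l|\to\infty$ from $|\Omega_l|\to\infty$. The paper does this in one line by invoking Proposition~\ref{prop : isoperimetrie} (the Coulhon--Saloff-Coste inequality $|\bar\Omega|^{(d-1)/d}\le C_2\,|B|$), so your detour through ball-covering arguments in Step~2 is unnecessary---and note that your worry about lower bounds on ball volumes is unfounded here, since the paper's definition of polynomial growth explicitly includes the two-sided bound $C_1^{-1}n^d\le V(n)\le C_1 n^d$.
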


Of course, the number $\sigma_k(\Omega_l, B_l)$ is defined as long as $k < |B_l|$, which is the case for an $l$ big enough due to the assumption that $|\Omega_l| \underset{l \to \infty }{\longrightarrow}\infty$, see Proposition \ref{prop : isoperimetrie} for more details. 
\medskip

Our approach to proving this is completely different from \cite{Per2} and looks more like \cite{HH}, in the sense that we do not work directly on graphs. However, the detour we make is not the same as the one made by \cite{HH}. We use tools described by Colbois, Girouard and Raveendran in \cite{CGR} to build a manifold associated to $\Gamma$ and a bounded domain associated to $(\Omega, B)$.  We then use results from Colbois,  El Soufi and Girouard \cite{CEG1} to give upper bounds for the Steklov eigenvalues of the domain, then apply Theorems of \cite{CGR} to transfer these upper bounds to the subgraph $(\Omega, B)$ by discretizing the domain into a graph with boundary that corresponds to $(\Omega, B)$.

\begin{notation}
Throughout the paper, we shall work on graphs and on manifolds. Graphs are denoted $\Gamma = (V, E)$ and manifolds are denoted $M$. The couple $(M,g)$ means that $M$ is endowed with a Riemannian metric $g$ and we use $|\cdot|_g$ to denote the Riemannian volume of a subset of $M$, as well as $d_M$ to denote the distance on $M$. We denote by $(N,\Sigma)$ a bounded domain of $M$, with $N$ the interior and $\Sigma$ the boundary. We shall use the variables $e, i, j, v$ to speak about vertices of graphs and $x, y, z$ for points on manifolds. Several constants will appear, we shall call them $C_1, C_2, \ldots$ ; each $C_l$ is used exactly once.
\end{notation}

\textbf{Plan of the paper.} In section \ref{section : 2} we recall some definitions and results about geometric group theory that are needed for the constructions that will follow. In section \ref{section : 3} we build a manifold $M$ that is modeled on a Cayley graph $\Gamma$ and prove some Propositions that will allow us to use results that we need on $M$. In section \ref{section : 4} we prove Theorem \ref{thm : principal} : we first explain how to associate a bounded domain $N \subset M$ to a subgraph $(\Omega, B)$ of   $\Gamma$ and we use it to obtain an upper bound for the Steklov eigenvalues of $(\Omega, B)$, which will allow us to conclude.
\medskip

\textbf{Acknowledgments.} I would like to warmly thank my thesis supervisor Bruno Colbois for having offered to work on this subject as well as for his many advice which enabled me to resolve the difficulties encountered. I also wishes to thank Niel Smith and Antoine Gagnebin for their careful rereading of this paper and for their various remarks which have led to its improvement.

\section{Cayley graphs and isoperimetric inequality} \label{section : 2}
Because we will work with graphs with boundary that are subgraphs of a polynomial growth Cayley graph, we recall here some definitions that we will use, as well as some properties satisfied by these graphs.
For further details, see \cite{DLH}.
\medskip

Let $G$ be a finitely generated infinite discrete group and $S \subset G$ be a symmetric ($S = S^{-1}$), finite generating subset such that $e \notin S$, where $e$ denote the identity of $G$. The Cayley graph associated to $(G,S)$ is an infinite connected undirected simple graph $Cay(G,S) =(V,E)$ of vertices $V = G$ endowed with the graph structure $E = \{\{i,j\} : i,j \in V  \; : \; \exists \; s \in S \; \mbox{such that} \; j = is\}$.

\begin{rem}
Hence a Cayley graph is regular, each vertex $i$ has degree $d(i) = |S|$. 
\end{rem}

A graph is a metric space when endowed with the path distance.  A path is a sequence of vertices $i_1 \sim i_2 \sim \ldots \sim i_{n+1}$.  If $i = i_1 \sim i_2 \sim \ldots \sim i_{n+1} =j$ is a minimal path joining $i$ to $j$, then we say that the distance between $i$ and $j$ is $n$. In particular, for $i, j \in Cay(G,S)$ such that $i \sim j$, we have $d(i,j) =1$.

We denote by $B(n)$ the ball of radius $n$ centered at $e \in Cay(G,S)$ . The growth function of $Cay(G,S)$ is defined by $V(n) = |B(n)|$. If there exists $d \in \N$ and $C_1 \geq 1$ such that for all $n \in \N$, we have
\begin{align*}
C_1^{-1} \cdot n^d \leq V(n) \leq C_1 \cdot n^d,
\end{align*}
we say that $Cay(G,S)$ is a Cayley graph with a growth rate that is polynomial of order $d$. It is well known that the growth rate does not depend on the choice of the subset $S$ (\cite{DLH}, chapter VI). Hence we can speak about the growth rate of the group $G$.

\begin{expl}
Let $G = \Z^d$ and $S = \{(\pm 1, 0, \ldots , 0), (0, \pm 1, 0, \ldots, 0), \ldots, (0, \ldots, 0, \pm 1) \}$.  Then $Cay(G,S)$ is a Cayley graph with polynomial growth rate of order $d$, called the integer lattice of dimension $d$ and simply denoted by $\Z^d$.
\end{expl}

\begin{prop} \label{prop : isoperimetrie}
Let $G$ be a group with polynomial growth rate of order $d$, $S$ a finite symmetric generating set, $e \notin S$ and $Cay(G,S)$ as above. Let $(\Omega, B)$ be a subgraph of  $Cay(G, S)$. Then there exists a constant $C_2$ depending only on $G$ and $S$ such that
\begin{align*}
\frac{|\bar{\Omega}|^{\frac{d-1}{d}}}{|B|} \leq C_2.
\end{align*}
\end{prop}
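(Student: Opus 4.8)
The statement is really an isoperimetric inequality for subsets of $Cay(G,S)$ in disguise, since by definition $B$ is the outer vertex boundary of $\Omega$. So the plan is to reduce it to the classical fact that a Cayley graph of polynomial growth order $d$ satisfies a $d$-dimensional isoperimetric inequality. The cleanest route is the Coulhon--Saloff-Coste isoperimetric inequality: for \emph{any} Cayley graph and any finite nonempty $A$, one has $|\partial A| \geq \dfrac{|A|}{2\,\rho(2|A|)}$, where $\rho(m) := \min\{n \in \N : V(n) \geq m\}$ is the inverse of the growth function. Here $\partial$ can be taken to be the outer vertex boundary, the inner vertex boundary, or the edge boundary: these notions are comparable up to a multiplicative factor depending only on $|S|$, which is harmless since $C_2$ is allowed to depend on $S$. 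Note that no growth hypothesis is needed for this step.

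Next I would feed in the polynomial growth assumption. From $V(n) \geq C_1^{-1} n^d$ it follows that $V(n) \geq m$ whenever $n \geq (C_1 m)^{1/d}$, hence $\rho(m) \leq (C_1 m)^{1/d} + 1$. Applying the Coulhon--Saloff-Coste inequality to $A = \Omega$, whose outer vertex boundary is precisely $B$, with $m = 2|\Omega| \geq 2$, and using $|\Omega| \geq 1$ to absorb the additive constant, gives
\begin{align*}
|B| \;\geq\; \frac{|\Omega|}{2\big((2C_1|\Omega|)^{1/d} + 1\big)} \;\geq\; c\,|\Omega|^{\frac{d-1}{d}}, \qquad c := \frac{1}{2\big((2C_1)^{1/d}+1\big)}.
\end{align*}
It then remains to pass from $|\Omega|$ to $|\bar\Omega| = |\Omega| + |B|$. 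Since $t \mapsto t^{(d-1)/d}$ is subadditive on $[0,\infty)$ and $|B| \geq 1$ with $\frac{d-1}{d} < 1$,
\begin{align*}
|\bar\Omega|^{\frac{d-1}{d}} = \big(|\Omega| + |B|\big)^{\frac{d-1}{d}} \;\leq\; |\Omega|^{\frac{d-1}{d}} + |B|^{\frac{d-1}{d}} \;\leq\; \Big(\frac1c + 1\Big)|B|,
\end{align*}
which is exactly the claim, with $C_2 = \frac1c + 1$ a constant depending only on $G$ and $S$.

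The only genuinely non-elementary ingredient is the Coulhon--Saloff-Coste inequality, for which I would just cite a standard reference (e.g.\ Woess's monograph, the original paper, or Perrin's article \cite{Per2}, where a closely related isoperimetric estimate already appears); everything else is routine bookkeeping, so I do not anticipate any serious obstacle. If one preferred to stay closer to the tools used elsewhere in this paper, an alternative would be to invoke Gromov's polynomial growth theorem to reduce to virtually nilpotent groups and then use their known isoperimetric profile, but this is heavier and not needed here.
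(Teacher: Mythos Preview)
Your argument is correct and is precisely the route the paper takes: the paper does not give its own proof but simply refers to Theorem~1 of \cite{CS}, which is the Coulhon--Saloff-Coste isoperimetric inequality you invoke. You have just unpacked that citation and carried out the (correct) bookkeeping to pass from $|\Omega|$ to $|\bar\Omega|$, so there is nothing to change.
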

This isoperimetric control will be very useful to conclude our proof of Theorem \ref{thm : principal}. For a proof of this proposition, one can see Theorem $1$ and its  illustration in the example below in \cite{CS}.

From here, we assume $d \geq 2$. This will guarantee that if $|\bar{\Omega}|$ is big, then so is $|B|$.


\section{Manifold modeled on graphs} \label{section : 3}

In order to use the results presented in \cite{CEG1}, we have to work on manifolds. This section is devoted to explain how we can associate a manifold $M$ to a Cayley graph $\Gamma$, in such a way that a subgraph $(\Omega, B)$ of $\Gamma$ corresponds to a discretization of a bounded domain $N$ of $M$. The idea comes from the work of Colbois, Girouard and Raveendran, see \cite{CGR}, where they construct manifolds with some desired properties that these manifolds share with their discretizations. 

We shall now explain how to construct a manifold that is modeled on a Cayley graph.
\medskip

Let $\Gamma = (V,E) = Cay(G,S)$ be a Cayley graph. We build what we call a \textit{fundamental piece} $(P, g_0)$, that is a smooth compact $d$-dimensional Riemannian manifold  with $|S|$ boundary component, homeomorphic to $\mathbb{S}^{d}$ with $|S|$ holes. Each boundary component possesses a neighborhood which is isometric to the cylinder $[0,2] \times \mathbb{S}^{d-1}$, with the boundary corresponding to $\{0\} \times \mathbb{S}^{d-1}$, as seen in figure \ref{fig : piece fondamentale simple}. On the cylindrical neighborhood of the boundary, $g_0$ is expressed as a product metric.

\begin{figure}[H]
\centering
\includegraphics[scale=0.17]{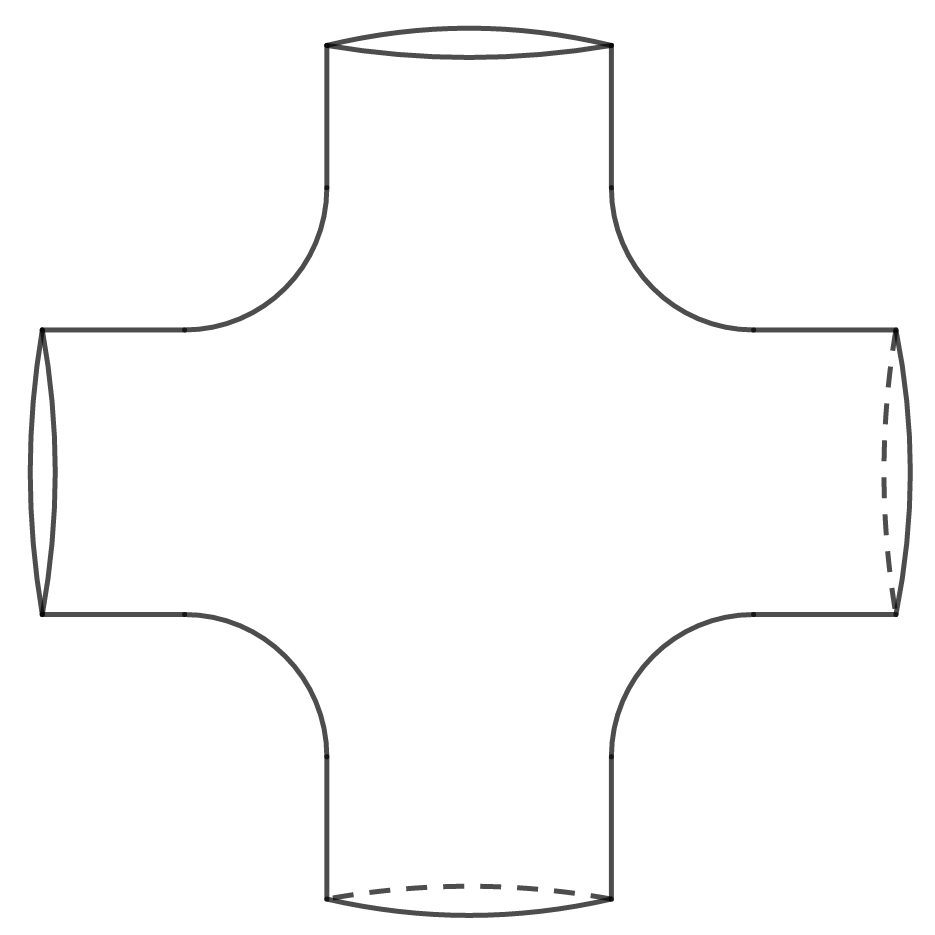}
\caption{A fundamental piece associated with the lattice $\Z^2$.}
\label{fig : piece fondamentale simple}
\end{figure}

\begin{rem} \label{rem : piece pas bien definie}
Outside the $2$-neighborhood of the boundary, we do not specify the geometry of $P$. The only thing we impose is that the piece is smooth. In subsection \ref{subsection : domaine associe}, we will particularize the geometry of our piece, but since this specification in not yet relevant, we do not emphasize it now.
\end{rem}

From this fundamental piece $P$, we construct a smooth unbounded complete $d$-dimensional  Riemannian manifold $(M,g)$, gluing infinitely many copies of $P$. For each vertex $i \in V$ we add one copy of $P$, denoted $P_i$. We call $P_i$ a piece of $M$. It is obvious that these pieces can be glued smoothly along their boundary because of the boundary's cylindrical neighborhood, see figure \ref{fig : variete modele}. The metric $g$ comes from the metric $g_0$ and $g$ is smooth because for each gluing part of $M$ there is a neighborhood isometric to the cylinder $[-2,2] \times \mathbb{S}^{d-1}$, with the gluing part corresponding to $\{0\} \times \mathbb{S}^{d-1}$ and where $g$ can be expressed as a product metric. As said before, on $[-2,0] \times \mathbb{S}^{d-1}$ and on $[0,2] \times
\mathbb{S}^{d-1}$, $g_0$ is a product metric. Hence, $g$ is smooth.

\begin{rem}
We do not specify which diffeomorphism is used to glue the pieces together, hence the manifold $M$ is not entirely well defined. This is not a problem for us, the properties we need about $M$ and that we shall prove are verified by any element of the family of manifolds described by our process. We pick one and call it $M$ for the purpose of this paper.
\end{rem}

Because $|V| = \infty$, $M$ is unbounded. Because the number of boundary components of $P$ is exactly $|S|$, this construction leads to a correspondence between $\Gamma$ and $M$, in such a way that if $i \sim j$, then $P_i$ is glued to $P_j$. Now that we have emphasized the links between $M$ and $\Gamma$ we can call $M$ a manifold \textit{modeled} on $\Gamma$. See \cite{CGR} for more applications of these manifolds modeled on graphs.

\medskip

From now we assume that $\Gamma$, as above, is a Cayley graph with polynomial growth of order $d \geq 2$ with a growth rate constant  $C_1$.

We now give some properties that are satisfied by such a manifold $M$ modeled on a Cayley graph $\Gamma$.

We define 
\begin{align*}
\varphi : \bigcup_{i \in V} \mbox{int}P_i & \longrightarrow V \\
             x & \longmapsto v_x
\end{align*}
where $v_x$ is the vertex of $\Gamma$ associated with the piece of $M$ to which $x$ belongs.
\begin{rem}
$\varphi$ is well defined because $x$ does not belong to the boundary of a piece of $M$. We can then extend $\varphi$ to the whole manifold : for $x \in M$ that belongs to the boundary of a piece, one of the two possibilities is chosen  once and for all. This extended map is called $\varphi$ again.
\end{rem}

\begin{rem}
As explained before, $\Gamma$ is endowed with the path distance, denoted $d_\Gamma$. By construction of $M$, if $x$ and $y$ do not belong to the same piece, then $d_\Gamma(v_x, v_y)$ represents the number of pieces that  must be crossed to go from $x$ to $y$ plus one.
\end{rem}

\begin{figure}[H]
\centering
\includegraphics[scale=0.1]{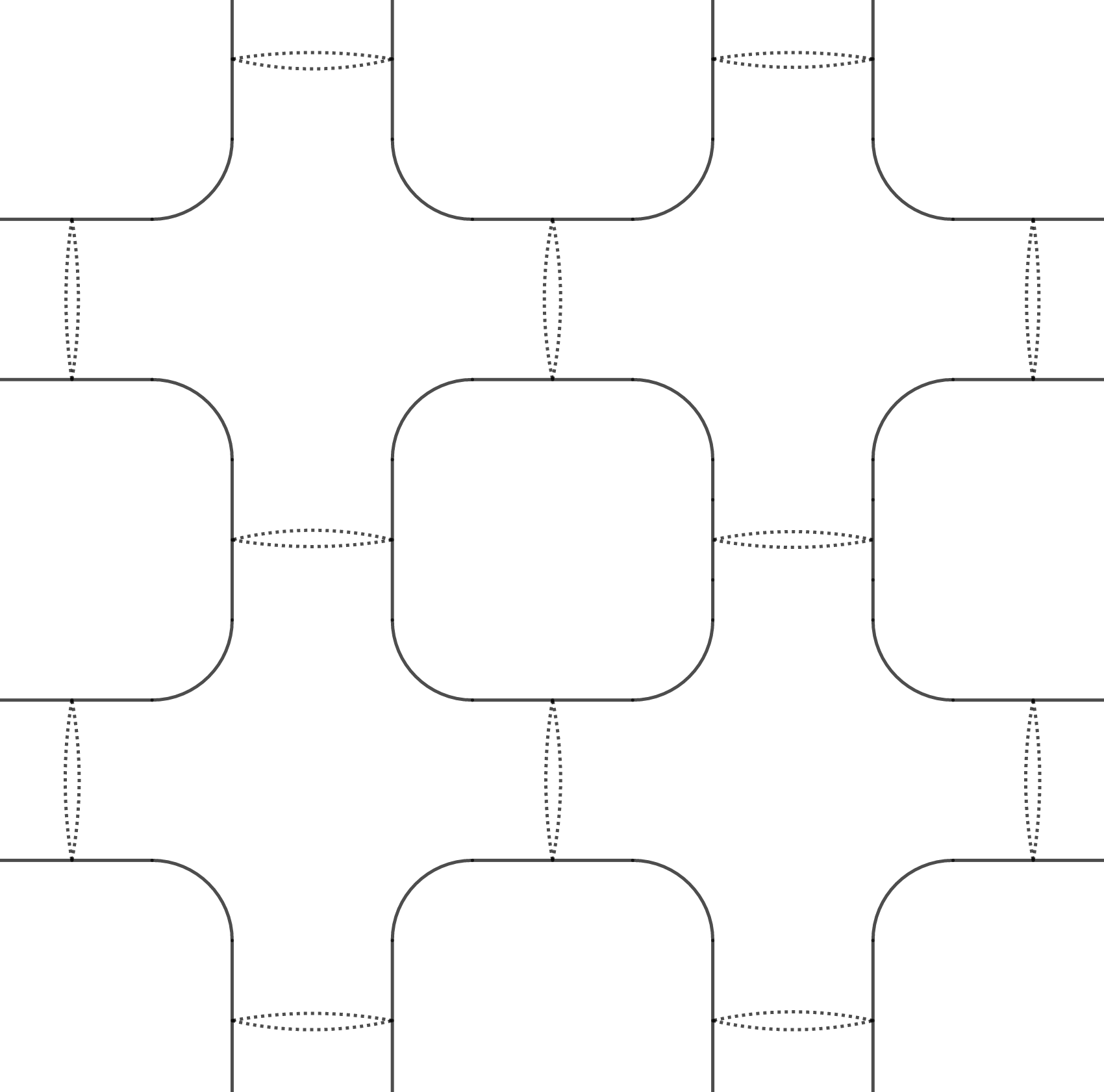}
\caption{Example of a manifold modeled on the lattice $\Z^2$.}
\label{fig : variete modele}
\end{figure}  

We now prove some results about $(M,g)$.
\begin{lemma} \label{lemme : distance}
There exist constants $C_3, C_4 >0$, depending only on $P$, such that for all $x, y \in M$ with $d_M(x,y)  \geq C_3$, we have 
\begin{align*}
C_4^{-1} \cdot d_\Gamma(v_x, v_y) \leq d_M(x,y) \leq C_4 \cdot d_\Gamma(v_x, v_y).
\end{align*}
\end{lemma}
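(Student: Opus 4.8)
The lemma says that $\varphi$ realizes a quasi-isometry between $(M,d_M)$ and $(\Gamma,d_\Gamma)$: once one has two-sided estimates $b\,d_\Gamma(v_x,v_y)-b'\le d_M(x,y)\le a\,d_\Gamma(v_x,v_y)+a'$ with constants depending only on $P$, the clean inequality claimed follows by choosing $C_3$ large enough that the additive terms get absorbed into the multiplicative ones. So the plan is to prove the upper bound, then the lower bound, and finally do the bookkeeping that produces $C_3$ and $C_4$.

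\textbf{Upper bound.} Set $D:=\mathrm{diam}(P,g_0)<\infty$. Given $x\in P_{v_x}$ and $y\in P_{v_y}$, I would pick a minimal path $v_x=j_0\sim j_1\sim\cdots\sim j_n=v_y$ in $\Gamma$, so $n=d_\Gamma(v_x,v_y)$; the copies $P_{j_0},\dots,P_{j_n}$ form a chain of consecutively glued pieces of $M$, and I join $x$ to $y$ by a path running through them in order, passing through one fixed point of each successive gluing locus. The part of this path lying in each of the $n+1$ pieces has length at most $D$, hence $d_M(x,y)\le(n+1)D$; in particular $d_M(x,y)\le D$ when $v_x=v_y$.

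\textbf{Lower bound.} I would decompose $M$ into the \emph{bodies} $P_i^{\circ}$, each copy $P_i$ with its cylindrical collar neighborhoods removed, and the \emph{collar region}, the complement of $\bigcup_i P_i^{\circ}$, whose connected components are exactly the gluing cylinders with their two body-ends removed, each isometric to $(-2,2)\times\mathbb{S}^{d-1}$ and joining the bodies of two adjacent vertices. Let $\gamma$ be any path from $x$ to $y$, and list the bodies it meets in order, $P_{i_0}^{\circ},\dots,P_{i_m}^{\circ}$, consecutive ones being distinct. Because a connected arc of $\gamma$ inside the collar region lies in a single gluing cylinder, consecutive $i_k$ are adjacent in $\Gamma$, so $i_0,\dots,i_m$ is a walk and $m\ge d_\Gamma(i_0,i_m)\ge d_\Gamma(v_x,v_y)-2$. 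Moreover, for each $k$ the portion of $\gamma$ running from its last visit (before entering $P_{i_{k+1}}^{\circ}$) to the gluing locus of the cylinder joining $P_{i_k}$ and $P_{i_{k+1}}$ up to that entry stays in this cylinder and must travel from the locus to where $P_{i_{k+1}}^{\circ}$ begins, a distance of at least $2$ because the cylinder carries a product metric; these sub-arcs are pairwise disjoint for $k=0,\dots,m-1$, so $\gamma$ has length $\ge 2m\ge 2d_\Gamma(v_x,v_y)-4$, and taking the infimum over $\gamma$ gives $d_M(x,y)\ge 2d_\Gamma(v_x,v_y)-4$.

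\textbf{Conclusion.} Since $P$ contains an isometric copy of $[0,2]\times\mathbb{S}^{d-1}$ we have $D\ge2$. Put $C_3:=5D$: if $d_M(x,y)\ge C_3$, the upper bound forces $d_\Gamma(v_x,v_y)\ge4$, hence $v_x\neq v_y$ and $2d_\Gamma(v_x,v_y)-4\ge d_\Gamma(v_x,v_y)$, so together with the two bounds $d_\Gamma(v_x,v_y)\le d_M(x,y)\le 2D\,d_\Gamma(v_x,v_y)$; thus $C_4:=2D$ works. The part I expect to require care is the lower bound: checking that the list of bodies visited by $\gamma$ really is a walk in $\Gamma$, that the collar-crossing sub-arcs can be chosen pairwise disjoint, and absorbing the harmless bounded additive loss coming from the case where $x$ or $y$ sits in a collar rather than in a body.
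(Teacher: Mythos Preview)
Your proof is correct and follows the same underlying idea as the paper's: both exploit that $d_\Gamma(v_x,v_y)$ counts the number of pieces a path from $x$ to $y$ must traverse, and both take $C_3$ and $C_4$ to be suitable multiples of $\mathrm{diam}\,P$. The paper's argument is considerably terser---it simply sets $C_3=\mathrm{diam}\,P+1$, $C_4=2\,\mathrm{diam}\,P+1$ and asserts the two-sided inequality directly from the piece-counting interpretation---whereas your body/collar decomposition and disjoint-crossing count make the lower bound fully explicit.
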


\begin{proof}
Let $C_3$ be the diameter of $P$ plus one, that is $C_3 = \mbox{diam}P+1 = \sup \{d_M(x,y) : x,y \in P\} +1$. Then for $x,y \in M$ such that $d_M(x,y) \geq C_3$, $x, y$ cannot belong to the same piece of $M$. Let $C_4 := 2 \cdot \mbox{diam}P+1$. Remember that the number $d_\Gamma(v_x, v_y)$ represents the number of pieces that  must be crossed to go from $x$ to $y$ plus one.
Then for $x,y \in M$ such that $d_M(x,y) > C_3$, we have
\begin{align*}
C_4^{-1} \cdot d_\Gamma(v_x, v_y) \leq d_M(x,y) \leq C_4 \cdot d_\Gamma(v_x, v_y).
\end{align*}
\end{proof}

\begin{lemma}
There exist constant $C_5, C_6 >0$, depending only on $\Gamma$ and $P$, such that for all $x \in M$ and $r > C_3$, we have 
\begin{align*}
C_5 \cdot r^d \leq |B(x, r)|_g \leq C_6 \cdot r^d.
\end{align*}
\end{lemma}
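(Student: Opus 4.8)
The plan is to compare the Riemannian volume of a ball $B(x,r) \subset M$ with the number of pieces $P_i$ it meets, and then to use the polynomial growth of $\Gamma$ to count these pieces. Fix $x \in M$ and $r > C_3$. Let $v_x = \varphi(x)$ and consider the set of vertices $W(x,r) = \{ v \in V : P_v \cap B(x,r) \neq \emptyset \}$. The key geometric observation is that each piece $P_v$ has a fixed Riemannian volume $|P|_{g_0} =: \mathrm{vol}(P)$ and fixed diameter $\mathrm{diam}\,P$, both depending only on $P$; hence $B(x,r)$ is contained in the union of the pieces $P_v$ with $v \in W(x,r)$, and conversely every piece $P_v$ with $d_\Gamma(v_x,v)$ small enough is entirely contained in $B(x,r)$. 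This sandwiches $|B(x,r)|_g$ between $c_1\cdot|W'(x,r)|$ and $c_2\cdot|W(x,r)|$ for suitable subsets of vertices, where $c_1, c_2$ depend only on $P$.

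First I would make the two inclusions precise. For the upper bound: if $P_v \cap B(x,r) \neq \emptyset$ then, since pieces are glued along boundaries and $\mathrm{diam}\,P$ is fixed, $P_v \subset B(x, r + C_3)$, so $B(x,r) \subset \bigcup_{v \in W(x,r)} P_v$ and therefore $|B(x,r)|_g \leq \mathrm{vol}(P)\cdot|W(x,r)|$. Moreover $v \in W(x,r)$ forces, via Lemma \ref{lemme : distance} applied when $d_M$ is large (and trivially otherwise, absorbing the bounded range into constants), that $d_\Gamma(v_x, v) \leq C_4 \cdot r + C_4 \cdot C_3$, i.e. $W(x,r)$ is contained in a $\Gamma$-ball of radius $O(r)$ around $v_x$. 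Since $\Gamma$ has polynomial growth of order $d$ with constant $C_1$, and balls in a vertex-transitive graph all have the same cardinality, $|W(x,r)| \leq C_1 \cdot (C_4 r + C_4 C_3)^d \leq C_6' \cdot r^d$ for $r > C_3$, after enlarging the constant to swallow the lower-order term. This gives the upper bound with $C_6 = \mathrm{vol}(P)\cdot C_6'$.

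For the lower bound I would reverse the argument: there is a constant $\rho > 0$, depending only on $P$, such that whenever $d_\Gamma(v_x, v) \leq C_4^{-1} r - \rho$ the whole piece $P_v$ lies in $B(x,r)$ (again using Lemma \ref{lemme : distance} together with the fact that adjacent pieces are at bounded distance). The number of such vertices is at least the cardinality of a $\Gamma$-ball of radius $\lfloor C_4^{-1} r - \rho \rfloor$, which by polynomial growth is at least $C_1^{-1} \cdot (C_4^{-1} r - \rho - 1)^d \geq C_5' \cdot r^d$ once $r$ is large; the pieces $P_v$ are essentially disjoint (they overlap only along their cylindrical collars, which have measure zero), so $|B(x,r)|_g \geq \mathrm{vol}(P') \cdot C_5' \cdot r^d$ where $\mathrm{vol}(P')$ is the volume of the non-collar part of $P$. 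Setting $C_5 = \mathrm{vol}(P')\cdot C_5'$ finishes the proof.

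The main obstacle is purely bookkeeping rather than conceptual: Lemma \ref{lemme : distance} only gives the bi-Lipschitz comparison between $d_M$ and $d_\Gamma$ in the regime $d_M(x,y) \geq C_3$, and $x$ may sit near a gluing collar rather than in the "center" of its piece, so one must be careful that the additive errors (coming from $\mathrm{diam}\,P$ and from the threshold $C_3$) are absorbed into multiplicative constants — this is legitimate precisely because we only claim the estimate for $r > C_3$, where $r$ and $r \pm O(1)$ are comparable. The other point requiring a line of justification is that, $\Gamma$ being a Cayley graph, it is vertex-transitive, so $|B_\Gamma(v, n)| = V(n)$ for every $v$, which is what lets us apply the growth bound centered at the arbitrary vertex $v_x$.
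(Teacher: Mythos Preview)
Your proposal is correct and follows essentially the same route as the paper: bound the Riemannian ball above and below by unions of whole pieces, translate the piece count into the cardinality of a graph ball via Lemma~\ref{lemme : distance}, and invoke the polynomial growth of $\Gamma$. If anything you are more careful than the paper's own proof --- you make explicit the additive errors coming from the threshold $C_3$ and from $\mathrm{diam}\,P$, and you note the vertex-transitivity of Cayley graphs to justify centering the growth estimate at $v_x$; the paper absorbs all of this silently. One small remark: the pieces $P_v$ are glued along their $(d-1)$-dimensional boundary spheres, so distinct pieces overlap only on a set of measure zero and you may use $\mathrm{vol}(P)$ itself rather than introducing a separate ``non-collar'' volume $\mathrm{vol}(P')$.
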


\begin{proof}
We use Lemma \ref{lemme : distance} to compare the distance in $\Gamma$ with the one in $M$. First we prove the right hand inequality.
\begin{align*}
|B(x,r)|_g & \stackrel{def}{=} |\{y  \in M : d_M(x,y) < r\}|_g \\
           & \leq |P|_g \cdot |\{v_y : y \in B(x,r)\}| \\
           & \leq |P|_g \cdot |B(v_x, r)| \cdot C_4 \\
           & \leq |P|_g \cdot C_1 \cdot  r^d \cdot C_4 \\
           & =: C_6 \cdot r^d.
\end{align*}
Now we prove the left hand side inequality.
\begin{align*}
|B(x,r)|_g & \stackrel{def}{=} |\{y  \in M : d_M(x,y) < r\}|_g \\
           & \geq |P|_g \cdot |\{v_y : y \in P_{g_y} \subset B(x,r) \}| \\
           & \geq |P|_g \cdot |B(v_x, r)| \cdot C_4^{-1} \\
           & \geq |P|_g \cdot C_1^{-1} \cdot r^d \cdot C_4^{-1} \\
           & =: C_5 \cdot r^d.
\end{align*}
\end{proof}

We just showed that for all $x \in M$ and for $r$ big enough, the volume of the ball centered at $x$ with radius $r$ is proportional to $r^d$. But as $M$ is a $d$-dimensional manifold, which means that $M$ is locally homeomorphic to $\R^d$,  this is also true for $r$ small enough. What we mean is, given $x \in M$, we can choose $r_0 >0$ sufficiently small to find $C_{r_0}, C_{r_0}'$ such that for all $r \leq r_0$ we have $C_{r_0} \cdot r^d \leq  |B(x,r)|_g \leq C_{r_0}' \cdot r^d$. 

For $x \in P$, call $r_0(x) \in (0, C_3]$ the bigger number such as above and define $r^* = \inf \limits_{x \in P}r_0(x)$. Because $P$ is smooth, the function $x \longmapsto r_0(x)$ is continuous. Moreover, $P$ is compact. Hence the number $r^*$ is strictly positive. Now recall that $M$ is obtained by gluing copies of this unique fundamental piece and we can conclude that $r^*$ is a uniform bound valid for any $x \in M$. We call $C_*, C_*'$ the constant satisfying $C_*  \cdot r^d \le |B(x,r)|_g \le C_*'  \cdot r^d$ for all $x \in M$ and $r \le r^*$.

So for all $x \in M$, we have that $|B(x,r)|_g \approx r^d$ for $r$ big enough and small enough. This leads to
\begin{prop} \label{prop : croissance poly}
There exist constants $C_7, C_8 >0$, depending only on $\Gamma$ and $P$, such that  for all $x \in M$ and $r >0$, we have
\begin{align*}
C_7 \cdot r^d \leq |B(x,r)|_g \leq C_8 \cdot r^d.
\end{align*}
\end{prop}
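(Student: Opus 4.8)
The plan is to stitch together the three regimes — small $r$, large $r$, and the intermediate range $r^* \leq r \leq C_3$ — into a single polynomial two-sided bound valid for all $r > 0$. The small-$r$ estimate $C_* \cdot r^d \leq |B(x,r)|_g \leq C_*' \cdot r^d$ for $r \leq r^*$ comes from the compactness/continuity argument already given, and the large-$r$ estimate $C_5 \cdot r^d \leq |B(x,r)|_g \leq C_6 \cdot r^d$ for $r > C_3$ comes from the previous Lemma. So the only thing left is to handle $r \in [r^*, C_3]$, which is a \emph{bounded} range bounded away from zero, and then to absorb the mismatch between the various constants into one pair $C_7, C_8$.

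First I would observe that $|B(x,r)|_g$ is monotone nondecreasing in $r$ for fixed $x$. Hence for $r \in [r^*, C_3]$ we have the crude sandwich
\begin{align*}
|B(x,r)|_g \geq |B(x,r^*)|_g \geq C_* \cdot (r^*)^d \geq C_* \cdot \left(\frac{r^*}{C_3}\right)^d \cdot r^d,
\end{align*}
which gives a lower bound of the form (constant) $\cdot\, r^d$ on the whole intermediate range, the constant depending only on $C_*, r^*, C_3$, hence only on $\Gamma$ and $P$. For the upper bound on the same range, monotonicity gives $|B(x,r)|_g \leq |B(x,C_3)|_g$; I would then bound $|B(x,C_3)|_g$ uniformly in $x$ — either by noting $B(x,C_3)$ meets only boundedly many pieces (at most $|S|^{C_3}$ of them, say, via Lemma \ref{lemme : distance}-type reasoning, since $d_M(x,y) \leq C_3$ forces $d_\Gamma(v_x,v_y)$ to be bounded), so $|B(x,C_3)|_g \leq (\text{const}) \cdot |P|_g$, or simply by applying the large-$r$ upper bound slightly past $C_3$. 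This yields $|B(x,r)|_g \leq (\text{const}) \cdot r^d$ on $[r^*, C_3]$ since $r \geq r^*$ makes $r^d$ bounded below there.

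Finally I would set $C_7 := \min\{C_*, C_5, (\text{intermediate lower constant})\}$ and $C_8 := \max\{C_*', C_6, (\text{intermediate upper constant})\}$, and check case by case ($r \leq r^*$, $r^* \leq r \leq C_3$, $r > C_3$) that $C_7 \cdot r^d \leq |B(x,r)|_g \leq C_8 \cdot r^d$ holds throughout, for every $x \in M$, with both constants depending only on $\Gamma$ and $P$. I do not expect any genuine obstacle here; the only mildly delicate point is making sure the intermediate-range bounds are truly uniform in $x \in M$, which follows because $M$ is built by gluing copies of the single compact piece $P$, so any ball of bounded radius sees a geometry that is controlled independently of its center — exactly the mechanism already invoked to produce $r^*$. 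The argument is essentially bookkeeping: combine three polynomial bounds on three intervals and take the worst constants.
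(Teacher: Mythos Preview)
Your proposal is correct and follows essentially the same route as the paper: both arguments patch together the small-$r$ and large-$r$ polynomial bounds by handling the bounded intermediate range $[r^*, C_3]$ via uniform (in $x$) absolute bounds on $|B(x,r)|_g$ coming from the compactness of the single model piece $P$, and then absorb the resulting constants into one pair $C_7, C_8$. The only cosmetic difference is that the paper packages the intermediate bounds as $C_9 := \inf_{x \in P} |B(x,r^*)|_g$ and $C_{10} := \sup_{x \in P} |B(x,C_3)|_g$, whereas you obtain the same control via monotonicity in $r$ together with the already-established small-$r$ lower bound and a piece-counting (or large-$r$) upper bound; the content is identical.
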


\begin{proof}
We already know this is true for all $r \leq r^*$ and for all $r > C_3$. Now consider $C_9 :=   \inf \limits_{x \in P} |B(x, r^*)|_g $ and $C_{10} :=  \sup \limits_{x \in P} |B(x,C_3)|_g$, which are strictly positive finite numbers following the same arguments as above. Then for all $x \in M$ and all $r^*  \le r \le C_3$, we have $ C_9 \le |B(x,r)|_g \le C_{10}$. Define $C_7 := C_* \cdot C_9$ and $C_8 := C_6 \cdot C_{10}$ and we are done.
\end{proof}

The next proposition is a packing property :
\begin{prop} \label{prop : packing}
There exists a constant $C_{11} \geq 1$, depending only on $\Gamma$ and $P$ such that for all $r >0$ each ball of radius $2r$ in $M$ can be covered by $C_{11}$ balls of radius $r$.
\end{prop}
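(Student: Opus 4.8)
The plan is to derive the packing property directly from the volume growth estimate in Proposition \ref{prop : croissance poly}, by the standard argument that a doubling measure gives a doubling (packing) metric space. First I would fix $r > 0$ and a ball $B(x, 2r) \subset M$. Inside $B(x,2r)$, choose a maximal $r$-separated set of points $\{x_1, \ldots, x_N\}$, that is, a maximal collection of points with $d_M(x_i, x_j) \geq r$ for $i \neq j$; maximality is possible since $M$ is a complete manifold and the closed ball $\overline{B(x,2r)}$ is compact. By maximality, the balls $B(x_i, r)$ cover $B(x, 2r)$: any point $y \in B(x,2r)$ is within distance $r$ of some $x_i$, otherwise $\{x_1, \ldots, x_N, y\}$ would be a larger $r$-separated set. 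So it suffices to bound $N$ by a constant depending only on $\Gamma$ and $P$.

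To bound $N$, I would use a volume-counting argument. The balls $B(x_i, r/2)$ are pairwise disjoint (since the $x_i$ are $r$-separated), and each is contained in $B(x, 3r)$ (indeed in $B(x, 2r + r/2)$). Hence, by Proposition \ref{prop : croissance poly},
\begin{align*}
N \cdot C_7 \cdot (r/2)^d \leq \sum_{i=1}^N |B(x_i, r/2)|_g = \Big| \bigcup_{i=1}^N B(x_i, r/2) \Big|_g \leq |B(x, 3r)|_g \leq C_8 \cdot (3r)^d.
\end{align*}
Therefore $N \leq C_8 \cdot 3^d \cdot 2^d / C_7 =: C_{11}$, a constant depending only on the constants $C_7, C_8$, hence only on $\Gamma$ and $P$. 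Replacing $C_{11}$ by $\max\{C_{11}, 1\}$ if necessary, we get $C_{11} \geq 1$, which completes the argument since the balls $B(x_i, r)$ cover $B(x,2r)$ and there are at most $C_{11}$ of them.

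The only point that requires a little care — and which I expect to be the mildest obstacle — is the existence of a maximal $r$-separated set and the compactness needed to ensure $N$ is finite a priori; this follows because $M$ is a complete Riemannian manifold so closed bounded sets are compact (Hopf–Rinow), or alternatively one can build the separated set greedily and observe that the volume bound above already forces the process to terminate after at most $C_{11}$ steps, making the compactness appeal unnecessary. No curvature or injectivity radius hypotheses are needed; the uniform two-sided polynomial volume bound valid at every scale, which is exactly what Proposition \ref{prop : croissance poly} provides, is all that the argument uses.
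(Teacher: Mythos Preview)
Your proof is correct and follows essentially the same approach as the paper: choose a maximal separated set inside the ball, observe that the associated small balls are disjoint and the larger ones cover, and bound the cardinality via the two-sided volume estimate of Proposition~\ref{prop : croissance poly}. The only differences are cosmetic choices of parameters (the paper takes an $r/2$-separated set in $B(x,3r/2)$ and ends up with $16^d\,C_8/C_7$, whereas you take an $r$-separated set in $B(x,2r)$ and obtain the slightly better $6^d\,C_8/C_7$), and you make explicit the finiteness of the separated set, which the paper leaves implicit.
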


\begin{proof}
Let $x \in M$ and $B(x,2r)$ be the ball centered at $x$ with radius $2r$. Choose a maximal set of $C_{11}$ points $x_i \in B(x, \frac{3r}{2})$ such that $d_M(x_i, x_j) \geq  \frac{r}{2}$ for $i \neq j$. It is clear by construction that the balls $B(x_i, \frac{r}{4})$ are mutually disjointed. Moreover, the balls $B(x_i, r)$ cover $B(x,2r)$. In order to show this, let $y \in B(x, 2r)$. Then there exists $y' \in B(x, \frac{3r}{2})$ such that $d_M(y,y') \leq \frac{r}{2}$. Because the set $\{x_i\}_{i=1}^{C_{11}}$ is maximal, there exists $1 \leq j \leq C_{11}$ such that $d_M(x_j, y') \leq \frac{r}{2}$. then by the triangular inequality,
\begin{align*}
d_M(x_j,y) \leq d_M(x_i, y') + d_M(y', y) \leq \frac{r}{2} + \frac{r}{2} = r,
\end{align*} 
which means that $y \in B(x_j, r)$. 

Then we have
\begin{align*}
C_{11} & \leq \frac{|B(x,2r)|_g}{\min \limits_{i}|B(x_i,\frac{r}{4})|_g} & &  \\
       & = \frac{|B(x,2r)|_g}{|B(x_0,\frac{r}{4})|_g}                    & &  \mbox{for a certain} \; x_0, \\
       & \leq  \frac{|B(x_0,4r)|_g}{|B(x_0,\frac{r}{4})|_g}              & & \mbox{because} \; B(x, 2r) \subset B(x_0, 4r), \\
       & \leq \frac{C_8 \cdot (4r)^d}{C_7 \cdot (\frac{r}{4})^d}         & & \\
       & = 16^d \cdot \frac{C_8}{C_7}.                                    & & \\
\end{align*}
\end{proof}

\section{Proof of the main Theorem} \label{section : 4}

We split the proof of Theorem \ref{thm : principal} into several Propositions that will lead us to the conclusion. 
\subsection{Domain associated to a subgraph} \label{subsection : domaine associe}

First we explain how to associate a bounded domain  $(N, \Sigma) \subset M$ to a subgraph $(\Omega, B)$ of $\Gamma$.

What we have to keep in mind is that we want to preserve the structure of $(\Omega, B)$ in the domain $N$. On this purpose, as said in remark \ref{rem : piece pas bien definie}, we have to specify the geometry of our fundamental piece $P$.
\medskip

The metric $g_0$ on $P$ is such that there exists a point $z \in P$ and an annulus $A(z,1, 3)= \{x \in P : 1 < d_M(z,x) <3\}$ which is isometric to the cylinder $[0,2] \times \mathbb{S}^{d-1}$. This annulus does not intersect any cylindrical neighborhood of the boundary, see figure \ref{fig : piece fondamental}.
\medskip

\begin{rem}
Because $P$ is still a smooth compact $d$-dimensional Riemannian manifold homeomorphic to $\mathbb{S}^d$ containing $|S|$ holes, with an appropriated neighborhood of the boundary, all Propositions we have stated about $M$ are verified.
\end{rem}

This annulus is the key point of our construction : if we decided to remove $B(z, 1)$ from $P$, then we would obtain a manifold which would be homeomorphic to a $d$-dimensional sphere with $|S| +1$ holes with the property that each boundary component possess a neighborhood isometric to the cylinder $[0,2] \times \mathbb{S}^{d-1}$. We will have to remove this annulus from some particular pieces that shall compose $N$ in order to guarantee the existence of a connected component of $\Sigma$ on those pieces. We will show in example \ref{expl : anneau} that this trick is necessary.

\begin{figure}[H]
\centering
\includegraphics[scale=0.17]{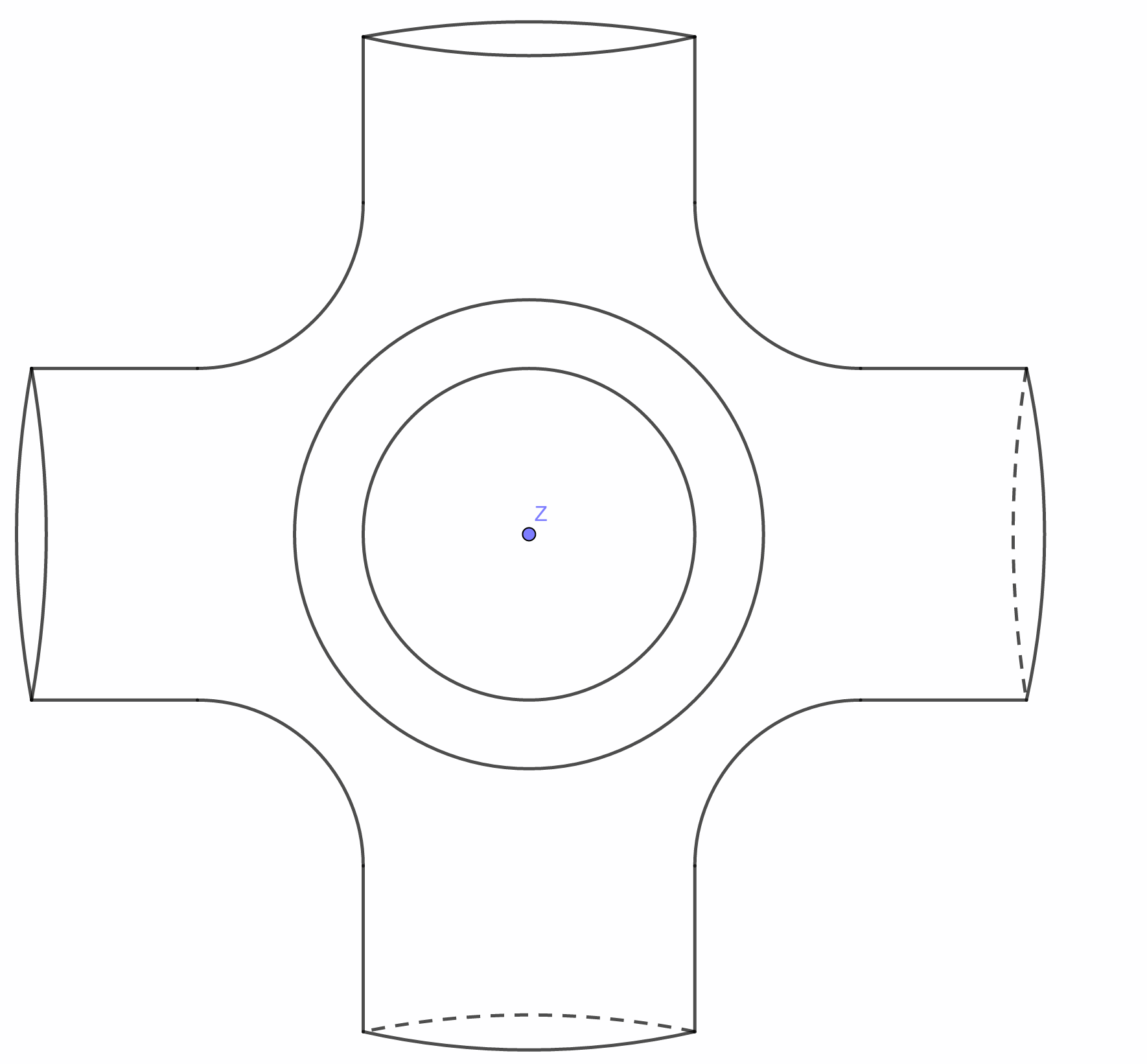}
\includegraphics[scale=0.2]{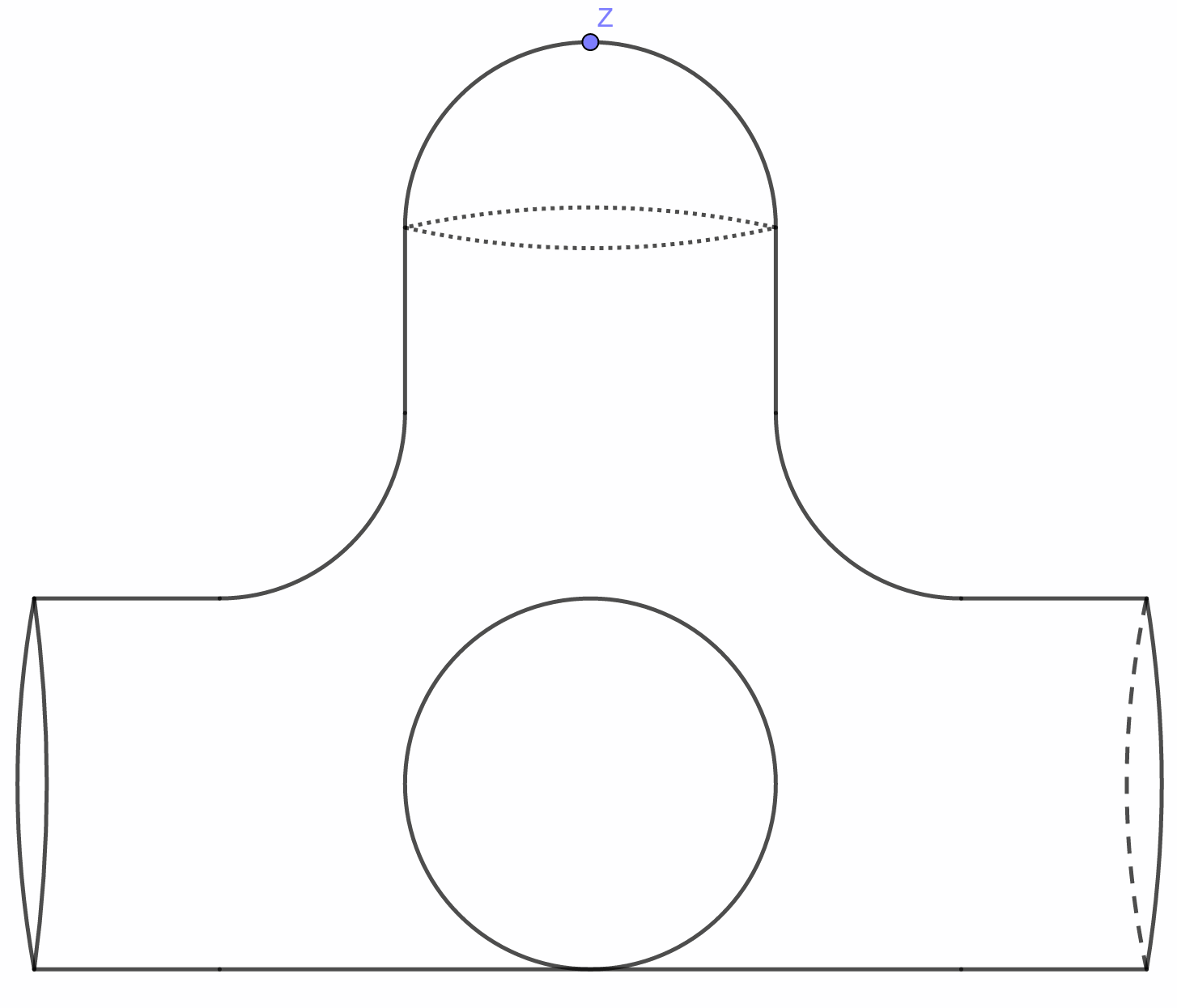}
\caption{Example of a fundamental piece associated with the lattice $\Z^2$. Removing the  ball $B(z, 1)$ leads to a fifth hole like the other four ones. The picture on the right is a view from the side.}
\label{fig : piece fondamental}
\end{figure}  

The copy of the point $z \in P$ on a piece $P_i$ associated to $i$ is denoted $z_i$.


Given a subgraph $(\Omega, B)$ of $\Gamma$, we shall associate a bounded domain $(N, \Sigma) \subset M$ to it. Proceed as follows :

\begin{itemize}
\item For each $i \in \bar{\Omega}$ we take $P_i$ the piece of $M$ associated to $i$ ;
\item If $i, j \in \bar{\Omega}$ are such that $i \sim j$ in $\Gamma$ but $i \nsim j$ in $(\Omega, B)$ (which could happen if $i, j \in B$), take the pieces $P_i$ and $P_j$ but disconnect them by removing a cylinder isometric to $[-1, 1]  \times \mathbb{S}^{d-1}$, where $\{0\} \times \mathbb{S}^{d-1}$ corresponds to the gluing part of these pieces ;
\item For all $j \in B$ remove the ball $B(z_j,1)$ from the piece $P_j$.
\end{itemize}
This gives us a natural boundary that we will call $\Sigma$, composed of several disjoint copies of $\mathbb{S}^{d-1}$, see figure \ref{fig : domaine variete modele}.

\begin{rem}
For $i \in \bar{\Omega}$, we continue to call $P_i$ the piece of $N$ that is associated to $i$, even if this piece is not a whole one.
\end{rem}

\begin{rem}  \label{rem : trick anneau}
The purpose of this maneuver is to imitate the structure of $(\Omega, B)$ on $(N, \Sigma)$. Our construction guarantees that for $i, j \in \bar{\Omega}$, we have the equivalence $i \sim j \Leftrightarrow P_i \sim P_j$, where $P_i \sim P_j$ means that $P_i$ is connected to $P_j$. Moreover, the boundary structure is preserved, that is $j \in B \Leftrightarrow P_j$ contain at least one connected component of the boundary.
The trick of the annulus is essential ; example \ref{expl : anneau} shows us that without it, the structure of the subgraph might not be reproduced by the domain.
\end{rem}

\begin{figure}[H]
\centering
\includegraphics[scale=0.06]{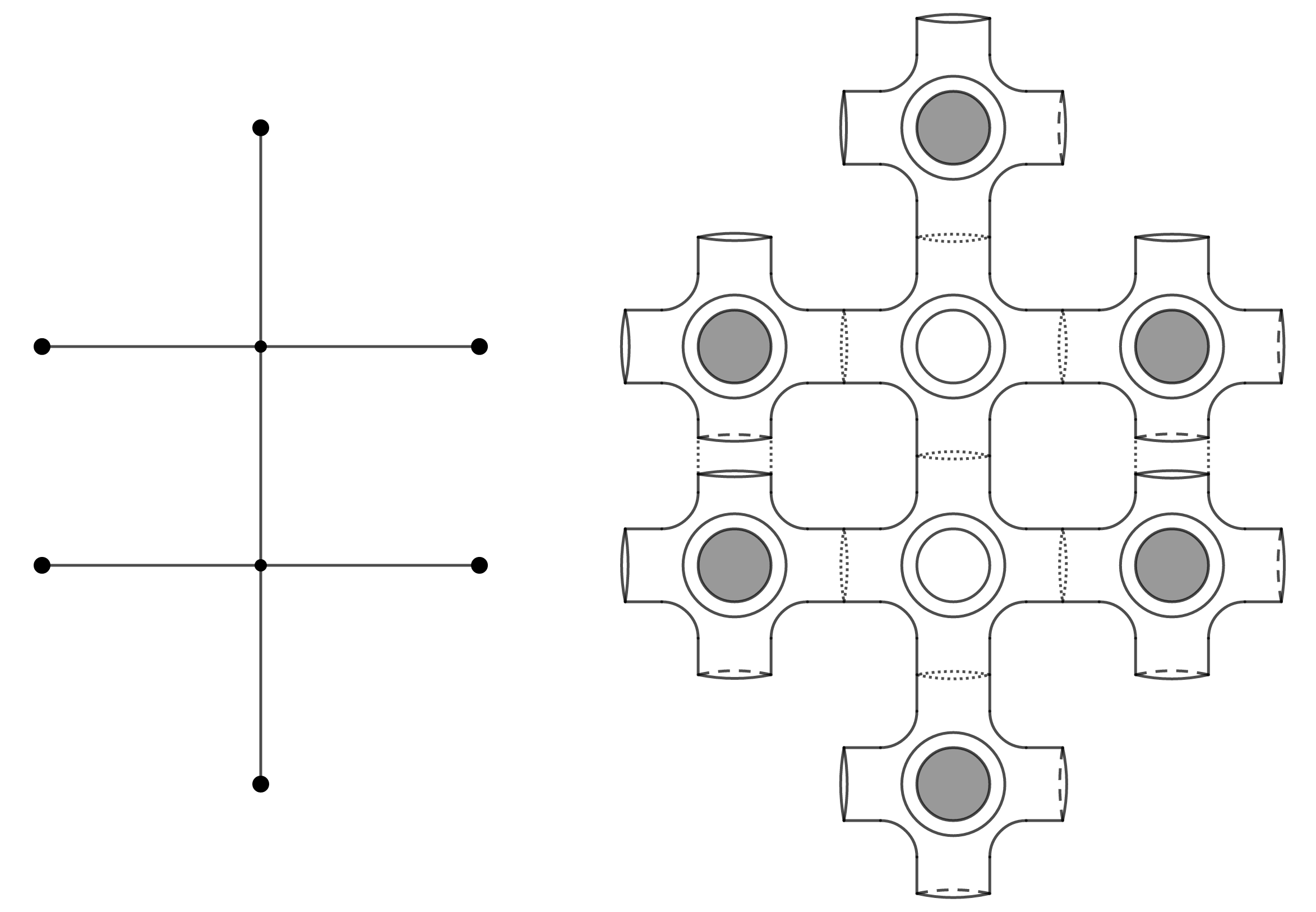}
\caption{Example of a subgraph of $\Z^2$ and a domain associated. On the left, the big dots represent the boundary $B$ while the small ones represent the interior $\Omega$. On the right, the grey balls are removed from the domain.}
\label{fig : domaine variete modele}
\end{figure}

\begin{expl} \label{expl : anneau}
Look at the lattice $\Z^2$, and let $\Omega = B(n)\backslash e$ be the centered ball of radius $n$, deprived of the origin. Then, for the induced subgraph $(\Omega, B)$, we have $e \in B$. Moreover, $e$ is adjacent to each of its four neighbors in $\Z^2$. Then the boundary of the domain would not have any component close to $P_e$ if we did not remove the ball $B(z_e, 1)$ from the piece $P_e$.
\begin{figure}[H]
\centering
\includegraphics[scale=0.06]{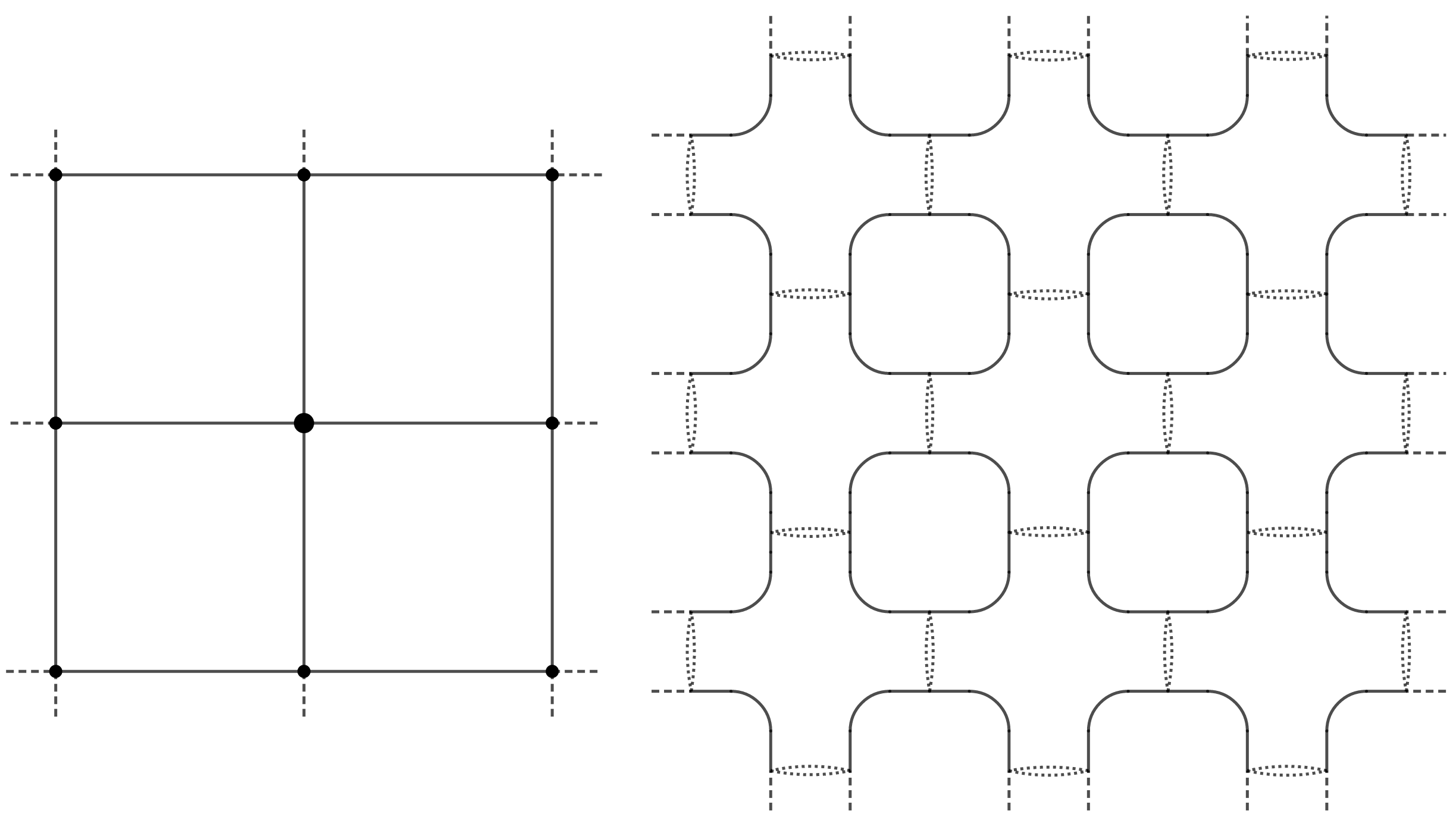}
\caption{Without the trick of the annulus, the boundary structure of the subgraph might not be reproduced on the associated domain : there is no boundary component near the central piece.}
\end{figure}
\end{expl}

Because $\Omega$ is chosen connected, so is the domain $(N, \Sigma)$. We denote by $|\Sigma|$ the $(d-1)$-volume of $\Sigma$, that is
\begin{align*}
|\Sigma| := \int_\Sigma dv_ \Sigma,
\end{align*}
where $dv_\Sigma$ is the measure induced by the Riemannian metric of $M$ restricted to $\Sigma$.
\begin{rem} \label{rem : vg isom}
$\Sigma$ possesses a neighborhood that is isometric to the cylinder $[0,1] \times \mathbb{S}^{d-1}$, for we took care of constructing a fundamental piece $P$ with a boundary that admits a neighborhood isometric to the cylinder $[0,2] \times \mathbb{S}^{d-1}$. Moreover, the annulus $A_i = A(z_i,1 ,3)$ was built in a way that removing $B(z_i, 1)$ leads to a component  of the boundary that possess a neighborhood isometric to $[0, 2] \times \mathbb{S}^{d-1}$, see figure \ref{fig : piece fondamental}.
\end{rem}
For $(N, \Sigma)$ a bounded domain in $M$, we introduce the isoperimetric ratio $I(N)$ defined by
\begin{align*}
I(N) = \frac{|\Sigma|}{|N|_g^{\frac{d-1}{d}}}.
\end{align*}

The idea is now to compare the eigenvalues of $(\Omega, B)$ with $N$'s ones. In order to do it, we state now a Theorem from Colbois, El Soufi and Girouard that gives us an upper bound for the Steklov spectrum of $N$.
\begin{thm} \label{thm : CEG}
Let $(M, g)$ be a complete $d$-dimensional manifold that satisfies properties of Proposition \ref{prop : croissance poly} and \ref{prop : packing}. Then there exists a constant $C_{12} = C_{12}(g)$ depending only on $C_{11}, C_7$ and $C_8$ coming from Propositions \ref{prop : croissance poly} and \ref{prop : packing} such that for any bounded domain $(N, \Sigma) \subset (M,g)$, we have for every $k \geq 0$,
\begin{align*}
\sigma_k(N) \cdot |\Sigma|^{\frac{1}{d-1}} \leq \frac{C_{12}}{I(N)^{\frac{d-2}{d-1}}} \cdot k^{\frac{2}{d}}.
\end{align*}
\end{thm}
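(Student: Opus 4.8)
The plan is to reduce Theorem \ref{thm : CEG} to a statement purely about metric measure spaces and then invoke the abstract eigenvalue bound from \cite{CEG1}. The key observation is that Theorem \ref{thm : CEG} is, word for word, one of the main results of \cite{CEG1}: in that paper Colbois, El Soufi and Girouard prove that if $M$ is a complete Riemannian manifold whose volume growth is controlled by $r^d$ from above and below on all scales, and which satisfies a uniform packing (doubling-type) property, then the Steklov eigenvalues of any bounded domain $N\subset M$ satisfy $\sigma_k(N)|\Sigma|^{1/(d-1)}\le C\, I(N)^{-(d-2)/(d-1)} k^{2/d}$, with $C$ depending only on the volume-growth and packing constants. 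So the "proof" consists in checking that our manifold $M$ from Section \ref{section : 3} meets the hypotheses of that theorem, and then quoting it.

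**First I would** recall the precise statement needed from \cite{CEG1}. The ingredients there are: (i) an Ahlfors-type volume regularity, i.e. constants $c,C>0$ with $c r^d\le |B(x,r)|_g\le C r^d$ for every $x\in M$ and every $r>0$; and (ii) a packing constant, i.e. a number controlling how many balls of radius $r$ are needed to cover a ball of radius $2r$. These are exactly Propositions \ref{prop : croissance poly} and \ref{prop : packing} above, which we have already established for our $M$ (with constants $C_7,C_8$ for the volume growth and $C_{11}$ for the packing). **Next I would** point out that the construction in \cite{CEG1} produces, from a bounded domain $(N,\Sigma)$, a family of disjointly supported test functions subordinate to a well-chosen cover of $M$ by metric balls; the number of such functions one can fit while keeping their Rayleigh quotients small is governed precisely by $k$, by $I(N)$, and by the packing/volume constants. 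Since those constants are uniform over all of $M$ by Propositions \ref{prop : croissance poly} and \ref{prop : packing}, the resulting bound is uniform over all bounded domains $N\subset M$, which is exactly the content of Theorem \ref{thm : CEG}. The constant $C_{12}$ is then by construction a function of $C_7,C_8,C_{11}$ alone.

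**The main (and essentially only) obstacle** is bookkeeping: one must verify that the hypotheses under which \cite{CEG1} proves its eigenvalue estimate are literally the two properties we have proven, and in particular that \cite{CEG1} does not secretly use, say, a lower Ricci bound or a fixed dimension-dependent constant that our $M$ fails to provide. Our $M$ is a fairly wild manifold (infinitely many glued copies of an unspecified fundamental piece), so the only control we can offer is metric-measure-theoretic; fortunately the argument in \cite{CEG1} is set up exactly at that level of generality. I would therefore state the proof as follows.

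\begin{proof}
By Propositions \ref{prop : croissance poly} and \ref{prop : packing}, the complete manifold $(M,g)$ has uniform polynomial volume growth of order $d$ (with constants $C_7,C_8$) and satisfies a uniform packing property (with constant $C_{11}$), the constants depending only on $\Gamma$ and on the fundamental piece $P$. These are precisely the hypotheses under which the main estimate of Colbois, El Soufi and Girouard applies: for any complete Riemannian manifold satisfying a two-sided volume bound $C_7 r^d\le |B(x,r)|_g\le C_8 r^d$ for all $x\in M$, $r>0$, and a packing bound by $C_{11}$, every bounded domain $(N,\Sigma)\subset M$ satisfies, for all $k\ge 0$,
\begin{align*}
\sigma_k(N)\cdot|\Sigma|^{\frac{1}{d-1}} \leq \frac{C_{12}}{I(N)^{\frac{d-2}{d-1}}}\cdot k^{\frac{2}{d}},
\end{align*}
where $C_{12}$ depends only on $C_7$, $C_8$ and $C_{11}$; see \cite{CEG1}. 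Applying this to our $M$ and the domain $(N,\Sigma)$ associated to $(\Omega,B)$ gives the claim.
\end{proof}
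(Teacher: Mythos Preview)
Your proposal is correct and matches the paper's own treatment: the paper does not prove Theorem \ref{thm : CEG} but simply refers to Theorem~2.2 of \cite{CEG1}, exactly as you do. One small wrinkle is that Theorem \ref{thm : CEG} is stated for \emph{any} complete $(M,g)$ with the volume-growth and packing properties, so your opening sentence invoking Propositions \ref{prop : croissance poly} and \ref{prop : packing} for the specific modeled manifold, and your closing sentence about the domain associated to $(\Omega,B)$, belong to the application rather than to the proof of the theorem itself.
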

For a proof of this, we refer to Theorem $2.2$ of \cite{CEG1}. Actually, the result obtained by Colbois, El Soufi and Girouard is a little bit more general than that, but this statement is enough for our need.
\begin{rem}
The constant $C_{12}$ does not depend on the subgraph neither on the induced domain.
\end{rem}

We can rearrange the result of Theorem \ref{thm : CEG} to get
\begin{align} \label{ineg : 1}
\sigma_k(N, \Sigma) \leq C_{12} \cdot \frac{|N|_g^{\frac{d-2}{d}}}{|\Sigma|} \cdot k^{\frac{2}{d}},
\end{align}
which is more adequate for the purpose of this proof.

\subsection{Isoperimetric control of a domain associated to a subgraph}
This subsection is devoted to state an isoperimetric inequality satisfied by a domain $(N, \Sigma)$ such as explained in the previous one.
\begin{prop}
Let $(\Omega, B)$ be a subgraph of $\Gamma$, a Cayley graph with polynomial growth rate of order $d \geq 2$. Let $M$ be a modeled manifold and $(N, \Sigma)$ be the domain of $M$ associated to $(\Omega, B)$. Then there exists a constant $C_{14}$ depending only on $\Gamma$ and $P$ such that we have
\begin{align} \label{ineg : 2}
|N|_g^{\frac{d-2}{d}} \leq C_{14} \cdot |\Sigma|^{\frac{d-2}{d-1}}.
\end{align}
\end{prop}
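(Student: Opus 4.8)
The plan is to reduce the inequality $|N|_g^{\frac{d-2}{d}} \leq C_{14} \cdot |\Sigma|^{\frac{d-2}{d-1}}$ to the combinatorial isoperimetric inequality of Proposition \ref{prop : isoperimetrie} by controlling the two Riemannian quantities $|N|_g$ and $|\Sigma|$ in terms of the combinatorial data $|\bar\Omega|$ and $|B|$ of the underlying subgraph. First I would estimate $|N|_g$ from above: since $N$ is obtained by taking the pieces $P_i$ for $i \in \bar\Omega$ and removing some cylinders and balls, each piece contributes at most $|P|_g$ to the volume, so $|N|_g \leq |P|_g \cdot |\bar\Omega|$. Then I would estimate $|\Sigma|$ from below: by the construction in subsection \ref{subsection : domaine associe} and remark \ref{rem : vg isom}, every $j \in B$ produces at least one connected component of $\Sigma$ (either the boundary created by removing $B(z_j,1)$ or, in the disconnection case, a copy of $\{0\}\times\mathbb{S}^{d-1}$), and each such component is isometric to $\mathbb{S}^{d-1}$ with the product metric, hence has a fixed $(d-1)$-volume $c_0 = |\mathbb{S}^{d-1}|$ depending only on $P$. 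Distinct boundary components coming from distinct vertices are disjoint, so $|\Sigma| \geq c_0 \cdot |B|$.

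Combining these two bounds with Proposition \ref{prop : isoperimetrie}, which gives $|\bar\Omega|^{\frac{d-1}{d}} \leq C_2 |B|$, I would write
\begin{align*}
|N|_g^{\frac{d-2}{d}} \leq (|P|_g)^{\frac{d-2}{d}} \cdot |\bar\Omega|^{\frac{d-2}{d}} = (|P|_g)^{\frac{d-2}{d}} \cdot \left(|\bar\Omega|^{\frac{d-1}{d}}\right)^{\frac{d-2}{d-1}} \leq (|P|_g)^{\frac{d-2}{d}} \cdot C_2^{\frac{d-2}{d-1}} \cdot |B|^{\frac{d-2}{d-1}},
\end{align*}
and then use $|B| \leq c_0^{-1} |\Sigma|$ to obtain $|N|_g^{\frac{d-2}{d}} \leq C_{14} |\Sigma|^{\frac{d-2}{d-1}}$ with $C_{14} = (|P|_g)^{\frac{d-2}{d}} \cdot C_2^{\frac{d-2}{d-1}} \cdot c_0^{-\frac{d-2}{d-1}}$, which depends only on $\Gamma$ (through $C_2$) and $P$ (through $|P|_g$ and $c_0$). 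The exponent bookkeeping is routine once one observes the identity $\frac{d-2}{d} = \frac{d-1}{d}\cdot\frac{d-2}{d-1}$, which is exactly what makes the combinatorial isoperimetric exponent match the one we want.

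The main obstacle, and the only place requiring genuine care, is the lower bound $|\Sigma| \geq c_0 |B|$: one must check that the construction really does associate to each boundary vertex $j \in B$ at least one full copy of $\mathbb{S}^{d-1}$ inside $\Sigma$, that these copies are pairwise disjoint across different $j$, and that no cancellation or overcounting occurs when a vertex is both disconnected from a neighbor and has its ball $B(z_j,1)$ removed. This is precisely the content of remark \ref{rem : trick anneau} — the annulus trick guarantees a boundary component near $P_j$ whenever $j \in B$ — so the argument is sound, but it is worth spelling out the case distinction ($j\in B$ with a removed ball versus boundary components arising from disconnected gluings) to be sure every boundary vertex is accounted for exactly once. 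One minor subtlety is the degenerate case $d=2$, where $\frac{d-2}{d}=0$ and the inequality is trivially $1 \leq C_{14}$; I would simply note that it holds by taking $C_{14}\geq 1$, or observe the statement is only used for $d\geq 3$ in what follows, but since the paper treats $d\geq 2$ uniformly the constant should be chosen $\geq 1$.
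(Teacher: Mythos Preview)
Your proof is correct and follows exactly the same route as the paper: bound $|N|_g \le |P|_g\,|\bar\Omega|$ and $|\Sigma| \ge |\mathbb{S}^{d-1}|\,|B|$, feed these into the combinatorial isoperimetric inequality $|\bar\Omega|^{\frac{d-1}{d}} \le C_2 |B|$, and rescale exponents via $\frac{d-2}{d}=\frac{d-1}{d}\cdot\frac{d-2}{d-1}$. Your explicit constant $C_{14}=(|P|_g)^{\frac{d-2}{d}} C_2^{\frac{d-2}{d-1}} c_0^{-\frac{d-2}{d-1}}$ coincides with the paper's $C_{13}^{\frac{d-2}{d-1}}$, and your extra remarks on the annulus trick and the degenerate case $d=2$ are welcome but not needed beyond what the paper already says.
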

\begin{proof}
As stated by Proposition \ref{prop : isoperimetrie}, we have 
\begin{align*}
\frac{|\bar{\Omega}|^{\frac{d-1}{d}}}{|B|} \leq C_2.
\end{align*}

By construction of the domain $N$, each vertex $i \in \bar{\Omega}$ adds a piece $P_i$ 
to $N$, so we have
\begin{align*}
|N|_g \leq |\bar{\Omega}| \cdot |P|_g,
\end{align*}
and each vertex $j \in B$ adds at least one copy of $\mathbb{S}^{d-1}$ to the boundary so we have
\begin{align} \label{ineg : sigma B}
|\Sigma| \geq |B| \cdot |\mathbb{S}^{d-1}|.
\end{align}
Altogether, this gives us 
\begin{align*}
\frac{|N|_g^{\frac{d-1}{d}}}{|\Sigma|} & \leq \frac{\left(|\bar{\Omega}| \cdot |P|_g\right)^{\frac{d-1}{d}}}{|B| \cdot |\mathbb{S}^{d-1}|} \\
            & \leq C_2 \cdot \frac{|P|_g^{\frac{d-1}{d}}}{|\mathbb{S}^{d-1}|} \\
            & = C_{13}.
\end{align*}
Raising it to the power $\frac{d-2}{d-1}$ gives us
\begin{align*}
\frac{|N|_g^{\frac{d-2}{d}}}{|\Sigma|^{\frac{d-2}{d-1}}} \leq  C_{13}^{\frac{d-2}{d-1}},
\end{align*}
which leads to 
\begin{align*} 
|N|_g^{\frac{d-2}{d}} \leq C_{14} \cdot |\Sigma|^{\frac{d-2}{d-1}}.
\end{align*}
\end{proof}

\subsection{Discretization of a Riemannian manifold}

Let us begin by explaining the strategy that motivates the next subsection. We have in our possession a subgraph $(\Omega, B)$ and a bounded domain $(N, \Sigma)$ associated to it. Moreover, we know how to estimate the Steklov spectrum of $N$, see (\ref{ineg : 1}). What we shall do from now is to associate a graph with boundary denoted $(\tilde{V}, \tilde{E}, V_\Sigma)$ to the domain $N$ in such a way that we will be able to estimate the Steklov spectrum of $(\tilde{V}, \tilde{E}, V_\Sigma)$ according to the one of $N$. This new graph with boundary $(\tilde{V}, \tilde{E}, V_\Sigma)$ is not our starting subgraph $(\Omega, B)$ but will be roughly isometric to it (see definition \ref{def : quasi isom}), which means that  $(\tilde{V}, \tilde{E}, V_\Sigma)$ and $(\Omega, B)$ are close enough for us to compare their spectra, which will allow us to conclude.
\begin{center}
\begin{tikzpicture}
   \node (1) at (0,0) {$\Gamma$} ;
   \node (2) at (0, -2) {$M$} ;
   \node (3) at (6, 0) {$(\Omega, B)$} ;
   \node (4) at (6, -2) {$(N, \Sigma)$} ;
   \node (5) at (10, -1) {$(\tilde{V}, \tilde{E}, V_\Sigma)$} ;
   
   \draw[->] (1) to node[pos=0.5]{\ modeled manifold} (2);
   \draw[->] (1) to node[pos=0.5, above]{subgraph} (3) ;
   \draw[->] (2) to node[pos=0.5, above]{domain} (4) ;
   \draw[<->] (3) to node[pos=0.5]{\: \ struture preserving} (4) ;
   \draw[->] (4) to node[pos=0.4, below right]{discretization} (5) ;
   \draw[<->] (3) to node[pos=0.4, above right]{roughly isometric} (5) ;
\end{tikzpicture}
\end{center}

We have to explain how to discretize a manifold. For further investigations on this topic, see \cite{CGR}.
\begin{defn}
We denote by $\mathcal{M} = \mathcal{M}(\kappa, r_0, d)$ the class of all compact Riemannian manifolds $N$ of dimension $d$ with smooth boundary $\Sigma$ satisfying the following assumptions:

There exist constants $\kappa \geq 0$ and $r_0 \in (0,1)$ such that
\begin{itemize}
\item The boundary  $\Sigma$ admits a neighborhood which is isometric to the cylinder $[0,1] \times \Sigma$, with the boundary corresponding to $\{0\} \times \Sigma$ ;
\item The Ricci curvature of $N$ is bounded below by $-(d-1)\kappa$ ;
\item The Ricci curvature of $\Sigma$ is bounded below by $-(d-2) \kappa $ ;
\item For each point $x \in N$ such that $d_M(x, \Sigma) >1$, $\mbox{inj}_M(x) > r_0$ ;
\item For each point $x \in \Sigma$, $\mbox{inj}_\Sigma(x) > r_0$.
\end{itemize}
\end{defn}

\begin{rem}
Because of the regularity of the modeled manifold $M$ and the compactness of the fundamental piece $P$, it is clear that there is some $\kappa$ and $r_0$ such that each bounded domain $N \subset M$ satisfies the last four assumptions. Moreover, a domain $N$ associated with a subgraph satisfies the first one as well, as state by the remark \ref{rem : vg isom}.
\end{rem}

\begin{defn} \label{def : quasi isom}
A rough isometry between two metric spaces $(X, d_X)$ and $(Y, d_Y)$ is a map $\phi : X \longrightarrow Y$ such that there exist constants $C_{15} >1, \; C_{16}, C_{17} >0$ satisfying 
\begin{align*}
C_{15}^{-1} \cdot d_X(x,y) -C_{16} \leq d_Y(\phi(x), \phi(y)) \leq C_{15} \cdot d_X(x,y) + C_{16}
\end{align*}
for all $x,y \in X$ and which satisfy 
\begin{align*}
\bigcup_{x \in X} B(\phi(x), C_{17}) = Y.
\end{align*}
\end{defn}

\begin{defn}
Given $\epsilon \in (0, \frac{r_0}{4})$, an $\epsilon$-discretization of a manifold $N \in \mathcal{M}$ is a procedure allowing to associate a graph with boundary $(\tilde{V}, \tilde{E}, V_\Sigma)$ to $N$, such that $N$ is roughly isometric to $(\tilde{V}, \tilde{E}, V_\Sigma)$.
\end{defn}

We now explain the procedure of discretization. Given $\epsilon  \in (0, \frac{r_0}{4})$, let $V_\Sigma$ be a maximal $\epsilon$-separated set in $\Sigma$. Let $V_\Sigma'$ be a copy of $V_\Sigma$ located $4 \epsilon$ away from $\Sigma$, 
\begin{align*}
V_\Sigma' = \{4\epsilon \} \times V_\Sigma \subset N.
\end{align*}
Let $V_I$ be a maximal $\epsilon$-separated set in $N \backslash [0, 4\epsilon) \times \Sigma$ such that $V_\Sigma' \subset V_I$.
The set $\tilde{V} = V_\Sigma \cup V_I$ is endowed with a structure of a graph, declaring
\begin{itemize}
\item Any two $v_1, v_2 \in \tilde{V}$ are adjacent if $d_M(v_1,v_2) \leq 3\epsilon$ ;
\item Any $v \in V_ \Sigma$ is adjacent to $v'=(4\epsilon, v) \in V_\Sigma'$.
\end{itemize}
The graph $(\tilde{V}, \tilde{E})$ obtained is a graph with boundary $(\tilde{V},\tilde{E},V_\Sigma)$, declaring $V_\Sigma$ as the boundary, $V_I$ as the interior. We shall call it $(\tilde{V}, V_\Sigma)$.

\begin{thm}
Given $\epsilon \in (0, \frac{r_0}{4})$, there exist constants $C_{18}, C_{19} >0$ depending on $\kappa, r_0, d$ and $\epsilon$ such that any $\epsilon$-discretization $(\tilde{V}, V_\Sigma)$ of a manifold $N \in \mathcal{M}(\kappa, r_0, d)$ satisfies
\begin{align*}
\frac{C_{18}}{k} \leq \frac{\sigma_k(N, \Sigma)}{\sigma_k(\tilde{V}, V_\Sigma)} \leq C_{19},
\end{align*}
for each $k \leq |V_\Sigma|$.
\end{thm}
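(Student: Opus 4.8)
The plan is to compare the two spectra through the variational characterisation of Steklov eigenvalues and to transfer test subspaces between $H^1(N)$ and $\R^{\tilde V}$. Recall that
\[
\sigma_k(N,\Sigma)=\min_{\dim E=k+1}\ \max_{0\ne u\in E}\ \frac{\int_N|\nabla u|^2\,dv_g}{\int_\Sigma u^2\,dv_\Sigma},\qquad
\sigma_k(\tilde V,V_\Sigma)=\min_{\dim F=k+1}\ \max_{0\ne f\in F}\ \frac{\sum_{\{a,b\}\in\tilde E}(f(a)-f(b))^2}{\sum_{v\in V_\Sigma}f(v)^2},
\]
where $E$ ranges over subspaces of $H^1(N)$ and $F$ over subspaces of $\R^{\tilde V}$. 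It therefore suffices to build a \emph{smoothing} map $\Xi\colon\R^{\tilde V}\to H^1(N)$ and a \emph{discretisation} map $\Theta\colon H^1(N)\to\R^{\tilde V}$ and to show that, up to multiplicative constants depending only on $\kappa,r_0,d,\epsilon$, the map $\Xi$ does not increase the Steklov Rayleigh quotient (this gives the right-hand inequality), whereas the span of the images under $\Theta$ of the first $k+1$ Steklov eigenfunctions of $N$ has Rayleigh quotient at most $k$ times $\sigma_k(N)$ (this gives the left-hand inequality).

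The maps are the usual ones attached to the net structure of the discretisation. Fix a partition of unity $\{\psi_v\}_{v\in\tilde V}$ subordinate to $\{B(v,2\epsilon)\}_{v\in\tilde V}$ with $|\nabla\psi_v|\le C/\epsilon$; such a partition exists because the covering has multiplicity bounded in terms of $\kappa,r_0,d,\epsilon$, by the packing Proposition~\ref{prop : packing}. Set $\Xi f=\sum_v f(v)\psi_v$, and let $\Theta u(v)$ be the mean value of $u$ over $B(v,\epsilon)$ (over $B(v,\epsilon)\cap\Sigma$ when $v\in V_\Sigma$). The comparison rests on four local estimates, all with constants depending only on $\kappa,r_0,d,\epsilon$: (a) $\int_N|\nabla\Xi f|^2$ is controlled by the graph Dirichlet sum $\sum_{\tilde E}(f(a)-f(b))^2$, using $\sum_v\nabla\psi_v=0$ and the finite overlap of the cover; (b) $\int_\Sigma(\Xi f)^2$ is bounded below by a multiple of $\sum_{v\in V_\Sigma}f(v)^2$, comparing the Gram matrix of $\{\psi_v|_\Sigma\}$ with a multiple of the identity, using the $\epsilon$-separation of $V_\Sigma$ inside the collar $[0,1]\times\Sigma$; (c) the graph Dirichlet sum of $\Theta u$ is controlled by $\int_N|\nabla u|^2$, via a Poincaré inequality on $\epsilon$-balls (Buser's inequality, valid from the Ricci lower bound and the injectivity-radius hypotheses defining $\mathcal M$) combined with Bishop–Gromov volume comparison to pass between integrals and averages; and (d) $\sum_{v\in V_\Sigma}\Theta u(v)^2$ is bounded below by a multiple of $\int_\Sigma u^2$ minus a multiple of $\int_{[0,1]\times\Sigma}|\nabla u|^2$, a trace–Poincaré estimate on the collar bounding the oscillation of $u$ over each boundary $\epsilon$-cell. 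Bishop–Gromov and the injectivity-radius bounds also give the uniform two-sided control of $|B(v,\epsilon)|_g$ and $|B(v,\epsilon)\cap\Sigma|$ that makes all constants depend only on $\kappa,r_0,d,\epsilon$.

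Granting these, the right-hand inequality is immediate: let $F$ be an optimal $(k+1)$-dimensional subspace for $\sigma_k(\tilde V,V_\Sigma)$; no nonzero $f\in F$ vanishes on $V_\Sigma$ (its Rayleigh quotient would be infinite), so $\Xi$ is injective on $F$ by (b), and combining (a) and (b) the Steklov Rayleigh quotient of $\Xi f$ is at most $C_{19}$ times that of $f$. For the left-hand inequality, take $L^2(\Sigma)$-orthonormal Steklov eigenfunctions $u_0,\dots,u_k$ of $N$, so that $\int_N\nabla u_i\cdot\nabla u_j=\sigma_i(N)\delta_{ij}$, and consider $f=\sum_i c_i\,\Theta u_i$. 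Applying Cauchy–Schwarz to the $k+1$ summands and then (c), the numerator of the Rayleigh quotient of $f$ is at most $(k+1)\sum_i c_i^2\sum_{\tilde E}(\Theta u_i(a)-\Theta u_i(b))^2\le C(k+1)\,\sigma_k(N)\sum_i c_i^2$; this is the only place the factor $k$ appears, and it is the reason the left-hand bound cannot be a constant. For the denominator, (d) shows the Gram matrix $M$ of $\{\Theta u_i\}$ in $\ell^2(V_\Sigma)$ satisfies $M\succeq c\,\mathrm{Id}-C\bigl(\int_N\nabla u_i\cdot\nabla u_j\bigr)_{ij}=(c-C\sigma_k(N))\,\mathrm{Id}$, so if $\sigma_k(N)$ is below a fixed threshold $\delta_0$ the denominator is at least $c'\sum_i c_i^2$ and $\sigma_k(\tilde V,V_\Sigma)\le C'k\,\sigma_k(N)$, while if $\sigma_k(N)\ge\delta_0$ one uses instead the trivial bound $\sigma_k(\tilde V,V_\Sigma)\le 2\max_v\deg(v)\le C$ (the graph has degree bounded in terms of $\kappa,r_0,d,\epsilon$, again by Proposition~\ref{prop : packing}) to obtain $\sigma_k(\tilde V,V_\Sigma)\le(C/\delta_0)\sigma_k(N)\le(C/\delta_0)\,k\,\sigma_k(N)$. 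Either way $\sigma_k(N,\Sigma)\ge(C_{18}/k)\,\sigma_k(\tilde V,V_\Sigma)$.

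I expect the main obstacle to be estimate (d) together with the lower bound on the Gram matrix $M$: one must rule out that discretisation collapses the boundary $L^2$-mass of a low eigenfunction, which cannot be done by naive sampling and instead exploits the product collar $[0,1]\times\Sigma$ and the harmonicity of $u$ (used through $\int_N\nabla u_i\cdot\nabla u_j=\sigma_i\delta_{ij}$), after which the dichotomy on the size of $\sigma_k(N)$ is needed because $\epsilon$ is a fixed constant and the error term in $M$ is only small when $\sigma_k(N)$ is. The remaining ingredients — the Buser/Poincaré inequality on $\epsilon$-balls, Bishop–Gromov comparison, and the finite-overlap partition-of-unity estimates — are routine given the bounded-geometry hypotheses defining $\mathcal M(\kappa,r_0,d)$ and the packing property of $M$, and are essentially the content of the discretisation results in \cite{CGR}.
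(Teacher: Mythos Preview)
The paper does not give its own proof of this theorem: it simply states that it is exactly Theorem~3, point~4), of \cite{CGR} and refers the reader there. Your sketch is essentially the argument carried out in \cite{CGR}: a smoothing map built from a bounded-overlap partition of unity, a discretisation map built from ball averages, local Poincar\'e and trace estimates on $\epsilon$-balls and on the product collar $[0,1]\times\Sigma$, and the dichotomy on the size of $\sigma_k(N)$ to handle the Gram-matrix lower bound. So your approach agrees with the cited source.

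One minor correction: your two appeals to Proposition~\ref{prop : packing} are misplaced. That proposition is about the specific modeled manifold $M$ constructed in this paper, whereas the theorem under discussion is stated for an arbitrary $N\in\mathcal{M}(\kappa,r_0,d)$, with constants depending only on $\kappa,r_0,d,\epsilon$. The bounded multiplicity of the cover $\{B(v,2\epsilon)\}$ and the uniform degree bound on $(\tilde V,\tilde E)$ must therefore come from Bishop--Gromov volume comparison together with the Ricci lower bound and the injectivity-radius hypotheses defining $\mathcal{M}$, not from the packing property of $M$. You do invoke Bishop--Gromov elsewhere, so this is just a matter of citing the right ingredient at those two spots.
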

This Theorem is exactly the Theorem $3$ point $4)$ of \cite{CGR}, one can look at for a proof.
\medskip

As an immediate consequence we have
\begin{align} \label{ineg : 3}
\sigma_k(\tilde{V}, V_\Sigma) \leq \frac{\sigma_k(N,\Sigma) \cdot k}{C_{18}},
\end{align}
which is more useful for us.

\begin{defn}
A rough  isometry between two graphs with boundary $(\Gamma_1, B_1)$ and $(\Gamma_2, B_2)$ is a rough isometry that sends $B_1$ to $B_2$.
\end{defn}

Now we shall emphasize the link between a subgraph $(\Omega, B)$ and an $\epsilon$-discretization $(\tilde{V}, V_\Sigma)$ of $N$, which is the purpose of the following Proposition.

\begin{prop}
Let $\Gamma = Cay(G,S)$ be a Cayley graph. Let $(\Omega, B)$ be a subgraph of $\Gamma$, $M$ be a manifold modeled on $\Gamma$ and $(N, \Sigma) \subset M$ be the bounded domain of $M$ associated to $(\Omega, B)$ as before. Let $(\tilde{V}, V_\Sigma)$ be any $\epsilon$-dicretization of $N$.

Then there exist constants $C_{15} >1, C_{16}, C_{17} >0$ depending only on $\Gamma$, $M$ and $\epsilon$ such that there exists a rough isometry $\phi : (\tilde{V}, V_\Sigma) \longrightarrow (\Omega, B)$ with constants $C_{15}, C_{16}, C_{17}$.
\end{prop}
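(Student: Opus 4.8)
The plan is to construct the rough isometry $\phi$ by sending each vertex of $\tilde V$ to the vertex of $\Gamma$ indexing the piece it lies in, that is, to set $\phi = \varphi|_{\tilde V}$ (up to the boundary convention for points on gluing cylinders), and then to check separately that $\phi$ respects distances up to multiplicative and additive constants, that its image is $C_{17}$-dense, and that it carries $V_\Sigma$ onto $B$.

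First I would establish the distance estimate. For two vertices $v_1, v_2 \in \tilde V$ lying in pieces $P_i$ and $P_j$ with $i = \phi(v_1)$, $j = \phi(v_2)$, I want
\begin{align*}
C_{15}^{-1}\, d_{\tilde V}(v_1, v_2) - C_{16} \leq d_\Gamma(\phi(v_1), \phi(v_2)) \leq C_{15}\, d_{\tilde V}(v_1, v_2) + C_{16}.
\end{align*}
The key intermediate object is $d_N$, the intrinsic distance in $N$. On one hand, since $(\tilde V, V_\Sigma)$ is an $\epsilon$-discretization, $d_{\tilde V}$ and $d_N$ are comparable up to constants depending on $\epsilon, \kappa, r_0$ (this is part of what a discretization gives; a path in $\tilde V$ of length $\ell$ traces out a chain of $\epsilon$-balls, and conversely a geodesic of length $L$ in $N$ can be covered by $\sim L/\epsilon$ of the separated points). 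On the other hand, I would relate $d_N$ to $d_M$ and then invoke Lemma \ref{lemme : distance} to pass from $d_M$ to $d_\Gamma(v_x, v_y)$. The subtlety here is that $N$ is obtained from $M$ by \emph{cutting} some cylinders and deleting some balls $B(z_j,1)$, so $d_N \geq d_M$ and paths may be forced to detour; but by construction (Remark \ref{rem : trick anneau}) $P_i \sim P_j$ in $N$ precisely when $i \sim j$ in $\Gamma$, so a path in $N$ between pieces $P_i$ and $P_j$ must cross exactly the sequence of pieces corresponding to some path in $(\Omega,B)$ from $i$ to $j$, and crossing one piece costs between a fixed lower bound and $\mathrm{diam}\,P$. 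Hence $d_N$ restricted to piece-indices is comparable, up to additive error absorbing the within-piece wandering, to $d_{(\Omega,B)}$, which in turn is comparable to $d_\Gamma$ on $\bar\Omega$ (here $d \geq 2$ and connectedness of $\Omega$ matter). Composing the three comparisons $d_{\tilde V} \leftrightarrow d_N \leftrightarrow d_{(\Omega,B)} \leftrightarrow d_\Gamma$ gives the required inequality, with the constants $C_{15}, C_{16}$ produced as products/sums of the constants from each stage; all of them depend only on $\Gamma$, $P$ (hence $M$) and $\epsilon$.

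Next I would verify density: every $i \in \bar\Omega$ must lie within bounded distance of $\phi(\tilde V)$. Since $V_I$ is a maximal $\epsilon$-separated set in the bulk of $N$, every piece $P_i$ with $i \in \Omega$ contains at least one point of $\tilde V$ (its interior has volume bounded below and diameter bounded above, so it cannot avoid a maximal $\epsilon$-net), so $i \in \phi(\tilde V)$ itself; and for $j \in B$, the piece $P_j$ has a boundary component, so $V_\Sigma$ meets it and $j \in \phi(V_\Sigma)$. Thus in fact $\phi$ is onto $\bar\Omega$ and any $C_{17} \geq 1$ works. Finally, the boundary condition: by the previous sentence $\phi(V_\Sigma) = B$, because a point of $V_\Sigma$ lies on a component of $\Sigma$, every component of $\Sigma$ sits on a piece $P_j$ with $j \in B$ (Remark \ref{rem : trick anneau}), and conversely each such $P_j$ carries a component of $\Sigma$ hence a point of $V_\Sigma$; so $\phi$ maps $V_\Sigma$ onto $B$, as a rough isometry of graphs with boundary must.

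The main obstacle I expect is the middle comparison $d_{\tilde V} \leftrightarrow d_\Gamma$, specifically controlling how the cuts and deletions defining $N$ distort the intrinsic distance: one must argue that removing the cylinders $[-1,1]\times\mathbb{S}^{d-1}$ and the balls $B(z_j,1)$ does not create long detours, which works only because these removals never disconnect $N$ (they only suppress adjacencies that are already absent in $(\Omega,B)$, and the deleted balls are ``caps'' whose removal leaves the piece connected via the annulus $A(z_j,1,3)$). Making this quantitative — uniformly over all subgraphs — is the crux; once the three-stage distance comparison is in hand, density and the boundary property are essentially bookkeeping.
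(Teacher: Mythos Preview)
Your overall architecture is sound, but there is one genuine slip. In your displayed inequality and in the sentence ``which in turn is comparable to $d_\Gamma$ on $\bar\Omega$'' you target the wrong metric: the rough isometry must compare $d_{\tilde V}$ with the \emph{intrinsic} path metric $d_{\bar\Omega}$ of the subgraph $(\Omega,B)$, not with the ambient metric $d_\Gamma$. These two are not uniformly comparable --- take $\Omega$ a long thin snake in $\Z^2$, where two vertices can be at $d_\Gamma$-distance $1$ but $d_{\bar\Omega}$-distance $n$ --- so the third stage of your chain $d_{\tilde V}\leftrightarrow d_N\leftrightarrow d_{(\Omega,B)}\leftrightarrow d_\Gamma$ is both false and unnecessary. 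Drop it: your two-stage comparison $d_{\tilde V}\leftrightarrow d_N\leftrightarrow d_{\bar\Omega}$ is exactly what is required, and the appeal to Lemma~\ref{lemme : distance} (which concerns $d_M$ versus $d_\Gamma$, not $d_N$ versus $d_{\bar\Omega}$) becomes irrelevant. The real content is precisely the structure-preservation from Remark~\ref{rem : trick anneau}, which you already invoke.

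By contrast, the paper bypasses $d_N$ entirely and argues combinatorially: it sets $C_{15}$ equal to three times the maximal cardinality of an $\epsilon$-separated subset of a single piece $P$, so that any two discretization vertices lying in the same piece are at $\tilde V$-distance at most $C_{15}$, and then reads off both distance inequalities directly from $P_i\sim P_j \Leftrightarrow i\sim j$ in $(\Omega,B)$. The paper also defines $\phi$ slightly differently from your $\varphi|_{\tilde V}$: an interior discretization vertex $v\in V_I$ lying in a \emph{boundary} piece $P_j$ (with $j\in B$) is redirected to some $i\in\Omega$ with $i\sim j$, so that $\phi(V_I)\subset\Omega$ in addition to $\phi(V_\Sigma)\subset B$. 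Your map only guarantees the latter; depending on how literally one reads ``sends $B_1$ to $B_2$'' in the definition of rough isometry of graphs with boundary (and on what Proposition~16 of \cite{CGR} actually needs for the spectral comparison), this redirect may be required.
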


\begin{rem}
The essential point of this Proposition is to state that the constants of the rough isometry can be chosen independently of the subgraph $(\Omega, B)$.
\end{rem}

\begin{proof}
Define 
\begin{align*}
\phi : (\tilde{V}, V_\Sigma) & \longrightarrow (\Omega, B) \\
\end{align*}
by :
\begin{itemize}
\item If $v \in V_\Sigma$, then $v$ is such that $v \in P_j$ for $j \in B$ and we define $\phi(v) =j$ ;
\item If $v \in V_I$ is such that $v \in P_i$ for $i \in \Omega$, we define $\phi(v) = i$ ;
\item If $v \in V_I$ is such that $v \in P_j$ for $j \in B$, we define $\phi(v)=i$ such that $i \in \Omega$ and $i \sim j$. If there are many such $i$, one amoung the at most $|S|$ possibilities is chosen once and for all ;
\item If $v$ lies on the gluing of two pieces,  one of the two possibilities is chosen once and for all.
\end{itemize}

Define $C_{15}$ as the triple of the cardinal of the biggest set of points $\epsilon$-separated of $P$. By compactness of $P$, $C_{15}$ is finite. From this definition it is forward that for $v_1, v_2 \in V$ such that $v_1$ belongs to the same piece as $v_2$, we have $d_{\tilde{V}}(v_1, v_2) \leq C_{15}$.

Recall that we chose the domain $N$ such that $N$ gets the same neighbor structure than the subgraph : for $i, j \in \bar{\Omega}$, we have $i \sim j \Leftrightarrow P_i \sim P_j$.

Now we define $C_{16} := C_{15}$. Hence, for all $v_1, v_2 \in \tilde{V}$,
\begin{align*}
d_{\tilde{V}}(v_1, v_2) \leq C_{15} \cdot d_{\bar{\Omega}}(\phi(v_1), \phi(v_2)) + C_{16}.
\end{align*}
In the same way we also have 
\begin{align*}
C_{15}^{-1} \cdot d_{\bar{\Omega}}(\phi(v_1), \phi(v_2)) - C_{16} \leq d_{\tilde{V}}(v_1, v_2).
\end{align*}
Remark now that $\phi$ is a surjective map. Hence we have
\begin{align*}
\bigcup_{v \in \tilde{V}} B(\phi(v), C_{17}) = (\Omega, B)
\end{align*}
for any value of $C_{17} > 0$. We can choose $C_{17} = 1$.
\end{proof}

This link between $(\Omega, B)$ and $(\tilde{V}, V_\Sigma)$ shall be exploited to give a relationship between the Steklov eigenvalues of theses graphs with boundary. We state here the Proposition $16$ of \cite{CGR} :

\begin{prop}
Given  $C_{15} >1, C_{16}, C_{17} >0$ there exist constants $C_{20}, C_{21}$  depending only on $C_{15}, C_{16}, C_{17}$ and on the maximal degree of the vertices, such that any two graphs with boundary $(\Gamma_1, B_1)$ and $(\Gamma_2, B_2)$ which are roughly isometric with constants $C_{15}, C_{16}, C_{17}$, satisfy
\begin{align*}
C_{20} \leq \frac{\sigma_k(\Gamma_1, B_1)}{\sigma_k(\Gamma_2, B_2)} \leq C_{21}
\end{align*}
for all $k  < \min\{|B_1|, |B_2|\}$.
\end{prop}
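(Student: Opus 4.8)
\emph{Proof strategy.} The plan is to run the standard transplantation argument for rough-isometry invariance of spectra, adapted to the Dirichlet-to-Neumann operator; the details are carried out in \cite{CGR}. I would start from the variational characterization of the Steklov spectrum: for a graph with boundary $(\Gamma, B)$, $\Gamma = (V,E)$,
\begin{align*}
\sigma_k(\Gamma, B) = \min_{\substack{F \subset \R^V \\ \dim F = k+1}} \; \max_{\substack{u \in F \\ u|_B \neq 0}} \frac{\sum_{\{i,j\} \in E}(u(i)-u(j))^2}{\sum_{i \in B} u(i)^2},
\end{align*}
where, by the discrete maximum principle, the competing subspaces may be assumed to consist of functions harmonic on $B^c$, hence pinned down by and quantitatively controlled by their boundary traces. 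Since a rough isometry admits a rough inverse with constants depending only on $C_{15}, C_{16}, C_{17}$, it suffices to prove the single inequality $\sigma_k(\Gamma_1, B_1) \leq C_{21}\, \sigma_k(\Gamma_2, B_2)$ (for $k\ge 1$; for $k=0$ both sides vanish), the lower bound following by symmetry with a constant $C_{20}$ of the same nature, and the restriction $k < \min\{|B_1|,|B_2|\}$ just guarantees that both eigenvalues are defined.

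For the one-sided bound I would fix an optimal $(k+1)$-dimensional subspace $F_2$ for $(\Gamma_2, B_2)$ and push it forward by the pullback operator $\Phi : \R^{V_2} \to \R^{V_1}$, $(\Phi u)(i) = u(\phi(i))$, setting $F_1 = \Phi(F_2)$. Two comparisons are then needed, both with constants depending only on $C_{15}, C_{16}, C_{17}$ and the maximal degree. \emph{Energy:} if $i \sim j$ in $\Gamma_1$ then $d_{\Gamma_2}(\phi(i),\phi(j)) \leq C_{15}+C_{16} =: L$, so $(\Phi u)(i) - (\Phi u)(j)$ is a telescoping sum of at most $L$ edge-differences of $u$ along a geodesic in $\Gamma_2$; Cauchy--Schwarz bounds its square by $L$ times the sum of those squared differences, and summing over $E_1$ — each edge of $\Gamma_2$ being used only boundedly often, because the fibres of $\phi$ lie in balls of radius $C_{15}C_{16}$ and the degrees are bounded — yields $\sum_{E_1}((\Phi u)(i)-(\Phi u)(j))^2 \leq A \sum_{\{i',j'\}\in E_2}(u(i')-u(j'))^2$. \emph{Boundary:} since $\phi$ carries $B_1$ onto (a co-bounded subset of) $B_2$ with bounded fibres, $\sum_{i \in B_1}(\Phi u)(i)^2 = \sum_j |\phi^{-1}(j)\cap B_1|\,u(j)^2 \geq a \sum_{j \in B_2} u(j)^2$, where in the co-bounded case one also invokes the elementary bound $|u(a)-u(b)|\le \sqrt{\text{energy}(u)}$ along short paths to pass from ``near $B_2$'' to ``on $B_2$''. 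Combining, $R_{\Gamma_1}(\Phi u) \leq (A/a)\,R_{\Gamma_2}(u)$ for every $u \in F_2$.

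The only genuine obstacle is the dimension count: $\Phi$ need not be injective on all of $\R^{V_2}$, so a priori $\dim F_1 < k+1$. I would resolve this exactly via the reduction above: restricting the min-max to functions harmonic on $B_2^c$ — which does not change the eigenvalues — these functions are determined by their boundary values, and since $\phi|_{B_1}$ is coarsely surjective onto $B_2$, $\Phi$ is injective on this class (two such functions with $\Phi u = \Phi u'$ would have to agree on a $C_{17}$-net of $B_2$, hence everywhere after the energy/Lipschitz bound). Thus $\dim F_1 = k+1$, and feeding $F_1$ into the min-max characterization for $(\Gamma_1, B_1)$ gives $\sigma_k(\Gamma_1,B_1) \leq (A/a)\,\sigma_k(\Gamma_2,B_2)$; taking $C_{21}=A/a$ and repeating the whole argument through a rough inverse of $\phi$ produces $C_{20}$, which completes the proof.
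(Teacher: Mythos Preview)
The paper does not prove this proposition at all: it merely quotes it as ``Proposition~16 of \cite{CGR}'' and refers the reader there. So there is no proof in the paper to compare against; your sketch is in fact supplying what the paper outsources.

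That said, your outline is the standard transplantation argument and is essentially what one finds in \cite{CGR}: variational characterization, pullback of test functions through the rough isometry, energy comparison via telescoping along short paths and bounded multiplicity, and a boundary-norm comparison. Two points in your write-up are softer than they should be. First, the definition of rough isometry between graphs with boundary used in the paper only asks that $\phi(B_1)\subset B_2$; it does not directly give you that $\phi(B_1)$ is $C_{17}$-dense in $B_2$, so the lower bound $\sum_{i\in B_1}(\Phi u)(i)^2 \ge a\sum_{j\in B_2}u(j)^2$ needs a little more care (one typically uses the rough inverse, which does send $B_2$ to $B_1$, to get the matching inequality). Second, your injectivity argument---``two harmonic functions agreeing on a $C_{17}$-net of $B_2$ must agree everywhere''---is not true as stated: a harmonic function is determined by its full boundary trace, not by its values on a net. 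The clean fix, which \cite{CGR} uses, is to transplant in the other direction (from $\Gamma_1$ to $\Gamma_2$ via the rough inverse) to get the inequality $\sigma_k(\Gamma_1,B_1)\le C_{21}\sigma_k(\Gamma_2,B_2)$ and then swap roles, rather than trying to force injectivity of $\Phi$ on harmonic functions. With those adjustments your argument goes through, and it is the same one the paper is invoking by citation.
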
 

Applied to our graphs, this leads to the existence of constants $C_{20}, C_{21}$ such that 
\begin{align*}
C_{20} \leq \frac{\sigma_k(\Omega, B)}{\sigma_k(\tilde{V}, V_\Sigma)} \leq C_{21}
\end{align*}
for all $k < |B|$, which we can rearrange in
\begin{align} \label{ineg : 4}
\sigma_k(\Omega, B) \leq \sigma_k(\tilde{V}, V_\Sigma) \cdot C_{21}.
\end{align}


Let us conclude our proof of Theorem \ref{thm : principal} by assembling the different results we obtained before.
\begin{align*}
\sigma_k(\Omega, B) & \stackrel{(\ref{ineg : 4})}{\leq} \sigma_k(\tilde{V}, V_\Sigma) \cdot C_{21} \\
                    & \stackrel{(\ref{ineg : 3})}{\leq} \frac{\sigma_k(N,\Sigma) \cdot k}{C_{18}} \cdot C_{21} \\
                    & \stackrel{(\ref{ineg : 1})}{\leq}  \frac{C_{12} \cdot \frac{|N|^{\frac{d-2}{d}}}{|\Sigma|} \cdot k^{\frac{2}{d}} \cdot k}{C_{18}} \cdot C_{21} \\
                    & \stackrel{(\ref{ineg : 2})}{\leq}  \frac{C_{12} \cdot \frac{ C_{14} \cdot |\Sigma|^{\frac{d-2}{d-1}}}{|\Sigma|} \cdot k^{\frac{2}{d}} \cdot k}{C_{18}} \cdot C_{21} \\ 
                    & := C_{22} \cdot \frac{1}{|\Sigma|^{\frac{1}{d-1}}} \cdot k^{\frac{d+2}{d}} \\                   
                    & \stackrel{(\ref{ineg : sigma B})}{\leq} C_{22} \cdot \frac{1}{(|B| \cdot |\mathbb{S}^{d-1}|)^{\frac{1}{d-1}}} \cdot k^{\frac{d+2}{d}} \\
                    & := C_{23} \cdot \frac{1}{|B|^{\frac{1}{d-1}}} \cdot k^{\frac{d+2}{d}}.
\end{align*}

Throughout the paper, we took care to specify on which parameters the constants depend. It happens that they depend only on $\Gamma, P$ and $\epsilon$, not on the subgraph $(\Omega, B)$ or the domain $(N, \Sigma)$ associated. Hence, if we set a fundamental piece $P$ associated to $\Gamma$, and if we set a value of $\epsilon$, the constant $C_{23}$ is now fixed.

Then, given $\Gamma$ a Cayley graph with polynomial growth rate of order $d \geq 2$, one can find a constant $C_{23} := C(\Gamma)$ such that for any subgraph $(\Omega, B)$ of $\Gamma$, we have 
\begin{align*}
\sigma_k(\Omega, B)  \leq C(\Gamma)  \cdot \frac{1}{|B|^{\frac{1}{d-1}}} \cdot k^{\frac{d+2}{d}},
\end{align*}
which proves Theorem \ref{thm : principal}.
\medskip

From this statement, let us prove Corollary \ref{corollaire : sigma_k}.
\medskip

Let $\Gamma = Cay(G,S)$ and $C(\Gamma)$ as above. Let $(\Omega_l, B_l)_{l=1}^\infty$ be a family of subgraphs such that $|\Omega_l| \underset{l \to \infty}{\longrightarrow} \infty$.

Because of the isoperimetric control stated by Proposition \ref{prop : isoperimetrie}, it is clear that $|B_l|  \underset{l \to \infty}{\longrightarrow} \infty$ too.
\medskip

Hence, for all $k$ fixed, we have
\begin{align*}
\sigma_k(\Omega_l, B_l) \leq C(\Gamma) \cdot \frac{1}{|B_l|^{\frac{1}{d-1}}} \cdot k^{\frac{d+2}{d}} \underset{l \to \infty}{\longrightarrow} 0,
\end{align*} 

which proves Corollary \ref{corollaire : sigma_k}.

\printbibliography

Université de Neuchâtel, Institut de Mathématiques, Rue Emile-Argand 11, CH-2000 Neuchâtel, Switzerland

E-mail address : leonard.tschanz@unine.ch

\end{document}